\newtheorem{theorem}{Theorem}[section]
\newtheorem{proposition}[theorem]{Proposition}
\newtheorem{lemma}[theorem]{Lemma}
\newtheorem{corollary}[theorem]{Corollary}
\newtheorem{prop-def}{Proposition-Definition}[section]
\newtheorem{defi}[theorem]{Definition}
\newenvironment{proof}{\trivlist \item[\hskip \labelsep{\it Proof.}]}{
 \endtrivlist}
\begin{document}

\title{ Cross-section Lattices of ${\mathcal J}$-irreducible Monoids and Orbit Structures of Weight Polytopes
\thanks{Project supported by national NSF of China (No 11171202).}
\author{Zhuo Li, ~~You'an Cao, ~~Zhenheng Li}
}
\date{}
\maketitle

\vskip -35mm

\begin{abstract}
Let $\lambda$ be a dominant weight of a finite dimensional simple
Lie algebra and $W$ the Weyl group. The convex hull of $W\lambda$
is defined as the weight polytope of $\lambda$. We provide a new
proof that there is a natural bijection between the set of orbits
of the nonempty faces of the weight polytope under the action of the Weyl
group and the set of the connected subdiagrams of the extended
Dynkin diagram that contain the extended node $\{-\lambda\}$. We
show that each face of the polytope can be transformed to a
standard parabolic face. We also show that a standard parabolic
face is the convex hull of the orbit of a parabolic subgroup of
$W$ acting on the dominant weight. In addition, we find that the
linear space spanned by a face is in fact spanned by roots.
\medskip

\noindent {\bf Keywords:} Weight polytope, Parabolic face,
Cross-section lattice, Weyl group, Dynkin diagram.

\medskip

\noindent {\bf 2010 Mathematics Subject Classification:}  17B45, 17B67, 20M32.
\end{abstract}

\def\J{{\cal J}}
\def\U{{\cal U}}
\def\Z {\mbox{{\bf Z}}}
\def\Q {\mbox{{\bf Q}}}
\def\U{{\cal U}}
\def\O{{\cal O}_{\Phi}}
\def\N{{\mbox{\bf N}}}
\def\F{{\mbox{\bf F}}}
\def\C{{\mbox{\bf C}}}
\def\R{{\mbox{\bf R}}}
\def\Z{{\mbox{\bf Z}}}
\def\a{\alpha}
\def\w{\omega}
\def\vw{\tilde{\omega}}
\def\b{\beta}
\def\g{\gamma}
\def\d{\delta}
\def\cI{\Pi_{\overline I^0}}
\def\I0{\overline I^0}
\def\L{\Lambda}
\def\la{\langle}
\def\ra{\rangle}

\baselineskip 15pt
\parskip 2pt

\section{Introduction}

The interests of studying the set of orbits of the faces of the
weight polytopes under the action of the Weyl groups arise partially from
the linear algebraic monoid theory \cite{Pu88, Re05, So95}.  A linear algebraic monoid is {\em irreducible} if it is irreducible as a variety. An irreducible monoid is {\em reductive} if its unit group is reductive. Let $M$ be a reductive monoid with unit group $G$. Consider the group
action of $G\times G$ on $M$ by $(g, h)\cdot a = gah^{-1}$ for $g,
h\in G$ and $a\in M$. The set $G\backslash M/G$ of orbits $GaG$
for this action is a poset with respect to inclusion. Associated with a reductive monoid is a polytope on which the Weyl group $W$ acts naturally. The lattice of orbits of $M$ is isomorphic to the lattice of  orbits of the faces of the polytope under the action of $W$. An
interesting problem in linear algebraic monoid theory is to find
the orbits of the reductive monoid $M$. In general, the problem is still open.

Let $B$ be a Borel subgroup of $G$ with maximal torus $T\subseteq B$. Denote by $\overline T$ the Zariski closure of $T$ in $M$, and let $E(\overline T) = \{e\in \overline T \mid
e^2=e\}$. The {\em cross-section lattice} of $M$ by definition is
\[
        \Lambda =\{e\in E(\overline T) \mid Be = eBe \},
\]
which is isomorphic to $G\backslash M/G$. In particular,
\[
    M = \bigsqcup_{e\in \Lambda} G e G \qquad\text{(disjoint union).}
\]

It is interesting and challenging to find the cross-section lattice
combinatorially. If $M$ has zero and its cross-section lattice has a unique minimal
nonzero element, then $M$ is referred to as {\em $\mathcal
J$-irreducible}. The cross-section lattices of $\mathcal
J$-irreducible monoids are described explicitly in \cite{PR}.
 In particular, let $G_0$ be a simisimple algebraic group and $\rho: G_0\to GL(V)$
 be a complex irreducible rational representation with highest weight $\lambda$. Then $G=K^*\rho(G_0)$ is reductive and
\[
    M(\lambda) = \overline {G}   \quad\quad\text{\rm (Zariski closure in End(V))}
\]
is a $\J$-irreducible monoid and all $\J$-irreducible monoids can
be constructed this way up to a finite morphism. Let $\mathfrak{ g}$ be the Lie algebra
of $G$ and $\Phi$ be its root system with a root basis
$\Pi=\{\a_1, \dots, \a_n\}$. Denote by $E$ the Euclidean space
spanned by $\Pi$, and by $\{\mu_1, \dots, \mu_n\}$ the
fundamental dominant weights with respect to $\Pi$. If $\lambda =
a_1\mu_1 + \cdots + a_n\mu_n$, the set
\[
    J = \{j\in {\bf {n}} \mid a_j = 0\}
\]
is called the {\em type of $M(\lambda)$} or simply the {\em type of $\lambda$} where ${\bf {n}}=\{1, \cdots, n\}$.
The result below describing the cross-section lattices of $\J$-irreducible monoids is a summary of Corollary 4.11 and Theorem 4.16 of \cite{PR}.

\vskip 2mm {\it {\bf Theorem A.} Let $J$ be the type of $M(\lambda)$ with $\lambda$ dominant. Then the nonzero elements of the cross-section
lattice of $M(\lambda)$ are in bijection with the subsets $I$ of
$\Pi$ where no connected component of $I$ lies entirely in $J$. } \vskip
2mm

The cross-section lattice of $M(\lambda)$ where the
dominant weight $\lambda$ is the highest root  of $\mathfrak{ g}$
was explicitly described in \cite[Theorem 3.1]{LC1} using
connected subsets of the affine Dynkin diagram.

The cross-section lattice of the $\J$-irreducible monoid $M(\lambda)$ can be characterized
using polytopes \cite{Pu88, Re05, So95}. The polytope associated with $M(\lambda)$ is the convex hull of
the orbit $W\lambda$, called the {\em weight polytope of $M(\lambda)$}, or simply the {\em weight polytope of $\lambda$}. If $\lambda$ is
 the highest root of $\mathfrak{ g}$ then the weight polytope is a {\em root polytope}.
 The following result can be deduced from \cite[Theorem 2]{KKMS73}
 and \cite[Theorem 8.4]{Pu88} and \cite[Section 5]{So95}.

\vskip 2mm
{\it {\bf Theorem B.} There is a one to one correspondence between
the cross-section lattice of $M(\lambda)$ and the set of orbits of
the faces of the weight polytope under the action of the Weyl
group. }
\vskip 2mm

The weight polytope of $\lambda$ depends only on the type $J$ of $\lambda$,
so we call $J$ the type of the weight polytope. All the strongly
dominant weights are of the same type $J=\emptyset$, the empty set.

\vskip 2mm {\it {\bf Theorem C.} If the type of the weight
polytope is $J$, then the set of orbits of the nonempty faces of the weight
polytope is in bijection with the set of subsets $I$ of $\Pi$
where no connected component of $I$ lies entirely in $J$. } \vskip
2mm

Though the theory of algebraic monoids accelerated in the last three decades,
some researchers do not know that the subject exists, and based on some minimal
evidence, some are discouraged by prerequisites which seem a little hard
to defeat.

In this paper, we restate Theorem C in terms of the language of Lie algebras
 and prove it using techniques from the theory of Lie algebras to
 make it more accessible to Lie algebraists. Adding a new node $\{-\lambda\}$
 to the Dynkin diagram of the Lie algebra $\mathfrak{ g}$ by drawing a single
 laced edge between $\{-\lambda\}$ and $\a_i$ if $(\lambda, \a_i)>0$ where $(~, ~)$ is the inner product on the Euclidean space $E$, we obtain
 an {\em extended Dynkin diagram}. Theorem C can then be described in Theorem D, which is a consequence of our Theorems \ref{bijection} and \ref{polyproperties}.

\vskip 2mm {\it {\bf Theorem D.} The orbits of the nonempty faces of the weight polytope
under the action of the Weyl group are in bijection with the
connected subdiagrams of the extended Dynkin diagram that contain
the extended node $\{-\lambda\}$.}
\vspace{2mm}

If $\lambda$ is the highest root of the Lie algebra $\mathfrak{
g}$, then Theorem D specializes the bijection for root polytopes proved in
\cite[Theorem 5.9]{CM}, which makes use of the properties of the root system of $\mathfrak{
g}$. The methods in
\cite{CM} motivate us to prove Theorem D without using
the theory of linear algebraic monoids. Our arguments are based on the weight set of $\mathfrak{
g}$, which is lacking of some advantages of the root system, so we need new techniques to over come this difficulty. This is reflected in the proofs of the results in Section \ref{proof}. We notice that different researchers study algebraic, geometrical, and combinatorial properties of root polytopes, and we refer the reader to \cite{ABH, CM2, M1, M2} for more details.

Our first main result Theorem \ref{keytheorem} states that a standard
parabolic face is the convex hull of the orbit of a parabolic
subgroup of $W$ acting on the dominant weight. This leads to the
proof of Theorem \ref{bijection}. The second main result Theorem
\ref{span} is an analogue of \cite[Corollary 5.4]{CM} for weight polytopes, stating that the
linear space spanned by a face of a weight polytope is in fact
spanned by roots. This result is used to prove that all nonempty faces of a
weight polytope are parabolic in Theorem \ref{polyproperties}.

\newpage
The rest of the paper is organized as follows. Section 2 provides some necessary
background knowledge. Section 3 is a summary of notation and terminology.
Section 4 is dedicated to the proof of the main results of the paper.

\section{Preliminaries}

Let $\mathfrak{ g}$ be a finite-dimensional complex simple Lie algebra. We fix a Cartan subalgebra $\mathfrak{ h}$ of
$\mathfrak{g}$ and assume that $\Phi$ is the root system of $\mathfrak{ g}$ with respect to $\mathfrak{ h}$.  Then $\mathfrak{g}$ has
the following root space decomposition
\[
\mathfrak{ g} =\mathfrak{ h}\oplus\sum_{\alpha\in \Phi} \mathfrak{
g}_{\alpha}.
\]
Let ${\bf {n}}=\{1,2,\ldots,n\}$, $\Pi=\{\a_i \mid
i\in{\bf {n}}\}$ be a root basis of $\Phi$, and $W$ be the
Weyl group generated by the simple reflections $\{s_{\a_i}\mid i\in
{\bf {n}}\}$. Let $\{\w_i \mid i\in {\bf {n}}\}$ be the fundamental weights with
respect to $\Pi$, and let $\vw_i=\frac{2\w_i}{(\a_i,
\a_i)}$ be the coweights of $\w_i$. We have $(\vw_i, \a_j)=\delta_{ij}$ for $i,j\in {\bf {n}}$.

Denote by
$P(\lambda)$ the set of weights of the irreducible highest weight module with highest dominant weight $\lambda$. Then $P(\lambda)$ is saturated, and there is a partial order on $P(\lambda)$: $\mu\le \nu$ if $\nu-\mu$ is a sum of simple roots. We will use repeatedly the following known results in the representation theory of Lie algebras.

\begin{proposition}\label{string}{\em (\cite[Proposition 3.6]{Kac})}
For $\mu\in P(\lambda)$ and $\a_i\in\Pi$, suppose that the $\a_i$-string through $\mu$ is: $\mu-p\a_i, \ldots, \mu, \ldots,\mu+q\a_i$. Then $p-q=\frac{2(\mu,\a_i)}{(\a_i,\a_i)}$.
In particular, if $(\mu,\a_i)>0$, then $\mu-\a_i\in P(\lambda)$.
\end{proposition}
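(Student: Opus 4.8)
The plan is to exploit the copy of $\mathfrak{sl}_2$ sitting inside $\mathfrak{g}$ attached to the simple root $\alpha_i$. Write $h_i$ for the coroot $\alpha_i^\vee$ and fix root vectors $e_i\in\mathfrak{g}_{\alpha_i}$, $f_i\in\mathfrak{g}_{-\alpha_i}$, so that $\mathfrak{s}_i=\mathbb{C}e_i\oplus\mathbb{C}h_i\oplus\mathbb{C}f_i$ is a three-dimensional simple subalgebra. Let $V$ be the irreducible module of highest weight $\lambda$, and inside $V$ consider the subspace $M=\bigoplus_{j\in\mathbb{Z}}V_{\mu+j\alpha_i}$ obtained by summing the weight spaces along the $\alpha_i$-direction through $\mu$. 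Since $e_i$ raises weights by $\alpha_i$ and $f_i$ lowers them by $\alpha_i$, the space $M$ is stable under $\mathfrak{s}_i$, and being finite dimensional it is a finite-dimensional $\mathfrak{s}_i$-module.

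The key computation is that $h_i$ acts on the weight space $V_{\mu+j\alpha_i}$ by the scalar $\langle\mu+j\alpha_i,\alpha_i^\vee\rangle=\frac{2(\mu,\alpha_i)}{(\alpha_i,\alpha_i)}+2j$. Writing $c=\frac{2(\mu,\alpha_i)}{(\alpha_i,\alpha_i)}$, the $h_i$-eigenvalues occurring on $M$ are exactly the numbers $c+2j$ for which $V_{\mu+j\alpha_i}\neq 0$. Now I invoke the representation theory of $\mathfrak{sl}_2$: the set of $h$-eigenvalues of any finite-dimensional $\mathfrak{sl}_2$-module is symmetric under $x\mapsto -x$, and the eigenvalues occurring in $M$ form an unbroken arithmetic progression of common difference $2$ centered at $0$. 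Consequently the weights $\mu+j\alpha_i$ lying in $P(\lambda)$ are precisely those with $-p\le j\le q$, and the two extreme eigenvalues, namely $c-2p$ at the bottom and $c+2q$ at the top, are negatives of one another. Adding $(c-2p)+(c+2q)=0$ gives $c=p-q$, which is the asserted identity $p-q=\frac{2(\mu,\alpha_i)}{(\alpha_i,\alpha_i)}$.

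For the final assertion, suppose $(\mu,\alpha_i)>0$, so that $c=p-q>0$. Since $q\ge 0$ by definition of the string, this forces $p\ge 1$. Hence the index $j=-1$ lies in the range $-p\le j\le q$, and therefore $\mu-\alpha_i=\mu+(-1)\alpha_i$ belongs to $P(\lambda)$.

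The one point that deserves care, and which I expect to be the main obstacle, is the claim that the string is unbroken (no gaps among the weights $\mu+j\alpha_i\in P(\lambda)$) and that the eigenvalues are symmetric about $0$; this is precisely what makes the $\mathfrak{sl}_2$ viewpoint worthwhile. Decomposing $M$ into irreducible $\mathfrak{s}_i$-submodules, a summand of highest weight $m$ contributes the consecutive symmetric block $-m,-m+2,\dots,m$, and since every contributing block is centered at $0$ and lies in the residue class $c\bmod 2$, their union is the single largest such block, hence unbroken and symmetric. A shorter alternative, if one prefers to avoid the module decomposition, is to use that $P(\lambda)$ is saturated and $W$-invariant: saturation yields the unbroken string directly, while applying the reflection $s_{\alpha_i}$, which preserves both $P(\lambda)$ and the line $\mu+\mathbb{R}\alpha_i$, sends the top weight to $s_{\alpha_i}(\mu+q\alpha_i)=\mu-(c+q)\alpha_i$, which must equal the bottom weight $\mu-p\alpha_i$; comparing coefficients again gives $c=p-q$.
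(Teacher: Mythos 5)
Your argument is correct: the $\mathfrak{sl}_2$-triple attached to $\alpha_i$ acts on $M=\bigoplus_j V_{\mu+j\alpha_i}$, all $h_i$-eigenvalues lie in the residue class $c\bmod 2$ with $c=\frac{2(\mu,\alpha_i)}{(\alpha_i,\alpha_i)}$, so the nested symmetric blocks from the irreducible constituents union to a single unbroken block centered at $0$, and matching the extremes gives $c=p-q$; the deduction that $(\mu,\alpha_i)>0$ forces $p\ge 1$ and hence $\mu-\alpha_i\in P(\lambda)$ is also fine. The paper offers no proof of this proposition --- it is quoted from \cite[Proposition 3.6]{Kac} --- and your argument is precisely the standard one used there (and in Humphreys, \S 21.3), so there is nothing to contrast; the only point needing the care you gave it is the unbrokenness of the string, which you correctly identified and settled via the decomposition into irreducibles.
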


\begin{proposition} \label{bki} {\em (\cite[Chapter VI, Proposition 24]{BBK})} Let $\Phi$ be a
root system and let $\Phi'$ be the intersection of $\Phi$ with a
subspace of $E$. Then $\Phi'$ is a root system in the subspace
spanned by $\Phi'$.
\end{proposition}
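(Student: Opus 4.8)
The plan is to verify directly that $\Phi'$ satisfies the axioms of a root system in the space $V':=\operatorname{span}_\R\Phi'$, equipped with the restriction of the inner product $(~,~)$ on $E$. I would take as axioms: (i) $\Phi'$ is finite, spans $V'$, and $0\notin\Phi'$; (ii) for $\a,\b\in\Phi'$ the Cartan integer $\frac{2(\b,\a)}{(\a,\a)}$ lies in $\Z$; and (iii) $\Phi'$ is stable under each reflection $s_\a$ with $\a\in\Phi'$ (together with the reduced condition that $\pm\a$ are the only multiples of $\a$ in $\Phi'$). My expectation is that (i) and (ii) are inherited from $\Phi$ with essentially no work, so that the substance of the proof lies in (iii).

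For (i) and (ii): since $\Phi'=\Phi\cap V$ is a subset of the finite set $\Phi$ it is finite, it spans $V'$ by definition, and $0\notin\Phi$ forces $0\notin\Phi'$. For $\a,\b\in\Phi'\subseteq\Phi$ the Cartan integer is already integral because $\Phi$ is a root system, and the reduced condition passes down because any multiple $c\a\in\Phi'$ is a multiple of $\a$ in $\Phi$, forcing $c=\pm1$.

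The main step would be (iii). Fixing $\a,\b\in\Phi'$, I would observe that $s_\a(\b)=\b-\frac{2(\b,\a)}{(\a,\a)}\a$ lies in $\Phi$ (because $\Phi$ is a root system) and also lies in $V$ (because it is a linear combination of $\a,\b\in V$ and $V$ is a subspace); hence $s_\a(\b)\in\Phi\cap V=\Phi'$. This shows $\Phi'$ is stable under $s_\a$. It then remains to confirm that $s_\a$ restricts to an honest reflection of $V'$: since $\a\in V'$ one has the orthogonal decomposition $V'=\R\a\oplus(V'\cap\a^\perp)$, and because $s_\a$ negates $\a$ and fixes $\a^\perp$ pointwise, its restriction to $V'$ negates $\a$ and fixes the hyperplane $V'\cap\a^\perp$. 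Thus $s_\a|_{V'}$ is precisely the reflection of $V'$ across $\a$, and together with the stability $s_\a(\Phi')=\Phi'$ this establishes (iii).

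The only delicate point I anticipate is this last observation, that the ambient reflection $s_\a$ descends to a reflection of the smaller space $V'$ rather than merely permuting $\Phi'$ inside $E$; the resolution is that $V'$ is $s_\a$-stable (being spanned by the $s_\a$-invariant set $\Phi'$) and that the orthogonal splitting of $V'$ along $\a$ makes the restriction a reflection automatically. Every other requirement is a straightforward inheritance from the corresponding property of $\Phi$.
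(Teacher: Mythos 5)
Your verification is correct and complete: the paper itself gives no proof of this proposition, citing it directly from Bourbaki, and your direct check of the axioms is exactly the standard argument underlying that reference. The two points that actually need attention --- that $s_\a(\b)$ lands back in $\Phi\cap V$ because it is a $\Phi$-root lying in the subspace $V$, and that $s_\a$ restricts to a genuine reflection of $V'=\operatorname{span}\Phi'$ via the splitting $V'=\R\a\oplus(V'\cap\a^\perp)$ --- are both handled properly, and the remaining axioms are, as you say, inherited trivially.
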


Let $I\subseteq {\bf {n}}$ and $W_I$ be the parabolic subgroup of $W$
generated by $s_{\a_i}$ for $i\in I$. Let $W^I$ be the minimal length
representatives of left cosets of $W_I$ in $W$. Then
\begin{eqnarray*}
W^I &=&\{w\in W \mid w{\color{blue}\cdot}\a_i >0 \mbox{ for } i\in I\}\\
&=&\{w\in W \mid l(ws_{\a_i})=l(w)+1\mbox{ for } i\in I\}.
\end{eqnarray*}

\begin{proposition}\label{decomposition}{\em(\cite[Chapter 5, Lemma C]{RK})} Every element
$w\in W$ has a unique decomposition $w=w^Iw_I$ such that
$l(w)=l(w^I)+l(w_I)$, where $w^I\in W^I, w_I\in W_I$.
\end{proposition}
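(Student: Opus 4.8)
The plan is to identify $w^I$ with the unique element of least length in the left coset $wW_I$ and to establish the length-additivity $l(w)=l(w^I)+l(w_I)$ through an induction that transports positivity of roots from the subsystem $\Phi_I=\Phi\cap\mathrm{span}\{\a_i\mid i\in I\}$ into $\Phi$. Throughout I rely on the classical length-sign lemma, which is already implicit in the two descriptions of $W^I$ recorded above: for $w\in W$ and $i\in I$ one has $l(ws_{\a_i})=l(w)+1$ exactly when $w\a_i>0$, and $l(ws_{\a_i})=l(w)-1$ when $w\a_i<0$. By Proposition \ref{bki}, $\Phi_I$ is a root system with positive system $\Phi_I^+=\Phi_I\cap\Phi^+$, its positive roots being the nonnegative integral combinations of $\{\a_i\mid i\in I\}$; and $W_I$ preserves $\Phi_I$ because each generating reflection $s_{\a_i}$ (for $i\in I$) preserves both $\Phi$ and $\mathrm{span}\{\a_i\mid i\in I\}$.

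For existence, choose $x$ of minimal length in the coset $wW_I$. If $x\a_i<0$ for some $i\in I$, then $l(xs_{\a_i})=l(x)-1$ while $xs_{\a_i}\in wW_I$, contradicting minimality; hence $x\a_i>0$ for all $i\in I$, that is $x\in W^I$. Putting $w^I=x$ and $w_I=(w^I)^{-1}w\in W_I$ yields a factorization $w=w^Iw_I$ of the desired shape, pending the length identity.

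The crux is the claim that $l(vu)=l(v)+l(u)$ for every $v\in W^I$ and $u\in W_I$, which I would prove by induction on $l(u)$; the case $u=e$ is clear. For the step, write $u=u's_{\a_i}$ with $i\in I$ and $l(u)=l(u')+1$, so $u'\a_i>0$. Since $u'\in W_I$ preserves $\Phi_I$, this root lies in $\Phi_I^+$, hence $u'\a_i=\sum_{j\in I}c_j\a_j$ with every $c_j\ge 0$. Applying $v$ gives $vu'\a_i=\sum_{j\in I}c_j(v\a_j)$, and each $v\a_j$ is a positive root because $v\in W^I$; a nonnegative combination of positive roots that is itself a root must be positive, so $vu'\a_i>0$. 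The length-sign lemma then gives $l(vu's_{\a_i})=l(vu')+1$, and the inductive hypothesis $l(vu')=l(v)+l(u')$ produces $l(vu)=l(v)+l(u)$. Taking $v=w^I$ finishes the existence part.

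Uniqueness follows from the same additivity. It shows that any $v\in W^I$ is the unique shortest element of its coset $vW_I$, since $l(vu)=l(v)+l(u)>l(v)$ whenever $u\in W_I\setminus\{e\}$. Thus if $w=xy=x'y'$ with $x,x'\in W^I$, $y,y'\in W_I$ and both factorizations length-additive, then $x$ and $x'$ are each the shortest element of the common coset $wW_I$, forcing $x=x'$ and then $y=x^{-1}w=y'$. I expect the one delicate point to be the implication $u'\a_i\in\Phi_I^+\Rightarrow vu'\a_i\in\Phi^+$ in the inductive step, where one must pass from positivity inside the subsystem to positivity in the ambient root system; the remainder is routine bookkeeping with the length-sign lemma.
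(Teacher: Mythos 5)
The paper does not prove this proposition at all; it simply cites it from Kane's book, so there is no in-paper argument to compare against. Your proof is correct and is essentially the standard one: take $w^I$ to be the minimal-length element of $wW_I$, establish $l(vu)=l(v)+l(u)$ for $v\in W^I$, $u\in W_I$ by induction on $l(u)$ using the length-sign criterion, and deduce uniqueness from the resulting strict minimality of $w^I$ in its coset. The key step you flag as delicate --- that $u'\a_i\in\Phi_I^+$ implies $vu'\a_i\in\Phi^+$ because a root that is a nonnegative combination of positive roots is positive --- is handled correctly. The only point you pass over silently is the existence of the descent $u=u's_{\a_i}$ with $i\in I$ and $l(u)=l(u')+1$ measured in the ambient length function: this needs the (standard, but not entirely free) fact that a nonidentity $u\in W_I$ sends some $\a_i$ with $i\in I$ to a negative root, equivalently that $W$-length restricted to $W_I$ agrees with the length of $W_I$ as a Coxeter group in its own right. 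Stating that one sentence would close the argument completely.
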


\section{Notation and Terminology}

Fix a dominant weight $\lambda$ in the euclidean space $E$ spanned by $\Pi$, once and forever. The convex hull $\bold P$ of $P(\lambda)$ is a polytope equal to the convex hull of $W\lambda$.

\begin{defi} We call $\bold P$ the {\em weight polytope} of $\lambda$.
\end{defi}

Let $\lambda=\sum_{i=1}^n m_i\alpha_i$. Then all the
coefficients $m_i$'s are non-negative rational numbers. Without loss of generality, we
may assume that all $m_i$'s are non-negative integers since a dilation of a polytope and itself have
the same face lattice structure. The following is a consequence of Proposition 11.3 of \cite{Kac}.

\begin{proposition} If all the coefficients in $\lambda=\sum_{i=1}^n m_i\alpha_i$ are integers, then $P(\lambda)$ is the
intersection of the weight polytope $\bold P$ with the root lattice.
\end{proposition}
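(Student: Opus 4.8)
The plan is to prove the set equality $P(\lambda) = \bold P \cap Q$, where $Q = \Z\Pi$ denotes the root lattice. The decisive consequence of the integrality hypothesis is that $\lambda = \sum_i m_i\a_i$ lies in $Q$, so that the coset $\lambda + Q$ coincides with $Q$ itself; this is what links the purely combinatorial condition ``$\mu \in Q$'' to the representation-theoretic condition ``$\mu \in \lambda + Q$''. I would establish the two inclusions separately, the first being routine and the second carrying all the weight.

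For the inclusion $P(\lambda) \subseteq \bold P \cap Q$: since $\bold P$ is by definition the convex hull of $P(\lambda)$, we have $P(\lambda) \subseteq \bold P$ at once. If $\mu \in P(\lambda)$, then $\lambda - \mu$ is a sum of simple roots by the partial order on $P(\lambda)$, whence $\lambda - \mu \in Q$; since $\lambda \in Q$ this forces $\mu \in Q$. Thus $P(\lambda) \subseteq \bold P \cap Q$.

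For the reverse inclusion $\bold P \cap Q \subseteq P(\lambda)$ --- the main content --- I would first reduce to the dominant case. All three of $\bold P$, $Q$, and $P(\lambda)$ are stable under $W$ (the first as the convex hull of the orbit $W\lambda$, the second because $W$ preserves the root lattice, the third because the weight set of a module is $W$-stable). Hence, replacing $\mu \in \bold P \cap Q$ by its dominant conjugate $w\mu$, it suffices to treat $\mu$ dominant. Now I would combine two facts. First, a dominant $\mu$ lying in $\bold P = \mathrm{conv}(W\lambda)$ satisfies $\lambda - \mu \in \R_{\ge 0}\Pi$, the standard description of the dominant points of a weight polytope. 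Second, $\mu \in Q$ together with $\lambda \in Q$ gives $\lambda - \mu \in Q = \Z\Pi$. Because $\Pi$ is a basis of $E$, the coordinates of $\lambda - \mu$ are uniquely determined, and a coordinate that is simultaneously a nonnegative real and an integer is a nonnegative integer; hence $\lambda - \mu \in \Z_{\ge 0}\Pi$, i.e. $\mu \le \lambda$. Finally, a dominant weight $\mu \le \lambda$ belongs to $P(\lambda)$ by the saturation of $P(\lambda)$, completing the inclusion.

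The crux of the argument --- and the step encapsulated by Proposition 11.3 of \cite{Kac} --- is the passage from membership in the real convex hull to membership in the integral partial order: convexity alone yields only nonnegative real coefficients, and it is precisely the integrality of $\lambda$, and hence of the whole coset, that upgrades these to nonnegative integers so that saturation may be applied. I expect the verification of the convex-hull characterization of dominant points and the appeal to saturation to be the places demanding the most care, while the reduction to the dominant case and the coordinate comparison are straightforward.
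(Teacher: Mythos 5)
Your proposal is correct. The paper itself offers no proof of this proposition --- it simply declares it a consequence of Proposition 11.3 of Kac's book --- and your argument is exactly the standard derivation of that cited fact: reduce to the dominant conjugate by $W$-stability, use convexity to get $\lambda-\mu\in\R_{\ge 0}\Pi$ (note this step does not even need $\mu$ dominant, since $\lambda-\mu=\sum_w c_w(\lambda-w\lambda)$ with each $\lambda-w\lambda\in\Z_{\ge 0}\Pi$; dominance is only needed for the saturation step), use integrality of $\lambda$ to upgrade the coefficients to nonnegative integers, and conclude by saturation that a dominant $\mu\le\lambda$ is a weight. All the facts you invoke (the convex-hull description, $W$-stability of $P(\lambda)$, and that a saturated weight set with highest weight $\lambda$ contains every dominant $\mu\le\lambda$) are standard and available in the paper's own preliminaries or in Humphreys, so the proof is complete.
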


If we express a weight $\mu$ as a linear combination
of simple roots $\a_i$, we use $c_i(\mu)$ to denote the coefficient of $\a_i$. For example, $c_i(\lambda) = m_i$.
Let $P_i$ be the set of weights $\mu$ such that $(\mu, \vw_i)=m_i$, in other words,
\[
    P_i = \{\mu\in P(\lambda) \mid c_i(\mu) = m_i \}.
\]
\begin{defi}The convex hull of
$P_i$ is called  the {\rm $i$th coordinate face} of $\bf P$, and will be denoted by $F_i$.
\end{defi}

Each coordinate face $F_i$ is a face of the weight
polytope $\bold P$ since $F_i$ sits in the hyperplane $\{x\in E
\mid (x, \vw_i)=m_i\}$ and all other weights in $F_i$ are in the half-space
$\{x\in E \mid (x, \vw_i)\le m_i\}$. It is, however, possible that $F_i$ is not a facet.

\begin{defi} \label{parabolicFace}
A face of $\bold P$ is {\em standard
parabolic} if it is an intersection of coordinate faces; a
face of $\bold P$ is {\em parabolic} if it can be
transformed into a standard parabolic face by an element of $W$.
\end{defi}

For any $I\subseteq {\bf {n}}$, the face
\[
    F_I=\{\mu\in \bold P \mid c_i(\mu)=m_i~{\rm for}~i\in I\}
\]
is standard parabolic, since $F_I=\cap_{i\in I}F_i$. On the other hand, for any standard
parabolic face $F$, there exists $I\subseteq {\bf {n}}$ such that $F=F_I$.
Clearly,
\[
P_I=\cap_{i\in I}P_i=\{\mu\in P(\lambda) \mid c_i(\mu)=m_i~{\rm for}~i\in I\}
\]
is the set of all weights in $F_I$. Note that $F_{{\bf {n}}} = P_{{\bf {n}}}= \{\lambda\}$. For convenience, let $F_\emptyset = {\bf P}$ and  $P_\emptyset = P(\lambda)$. So the standard parabolic face $F_I$ is not empty for all $I\subseteq {\bf {n}}$.

\begin{defi}\label{extendedDiagram}
An extended Dynkin diagram is the diagram obtained by adding a
new node $\{-\lambda\}$ to the Dynkin diagram, in which a
single laced edge is drawn between $\{-\lambda\}$ and $\a_i$ if and only if
$(\lambda, \a_i)>0$ for $i\in {\bf n}$.
\end{defi}

\section{Weight Polytope}\label{proof}
The purpose of this section is to show in Theorem \ref{bijection} that there is a bijection between the set of standard
parabolic faces and the set of certain connected components in the
extended Dynkin diagram, and then in Theorem \ref{polyproperties} that all nonempty faces of a weight polytope are parabolic.

If $I$ is a subset of ${\bf {n}}$, let $\overline I$ be the complement
set of $I \subseteq {\bf {n}}$, and let $\Pi_I = \{\a_i \in \Pi \mid i\in
I\}$. We begin our discussion by introducing the following two lemmas.
\begin{lemma}\label{chainlemma}
Let $\mu, \nu$ be weights in a standard parabolic proper face $F_I$
of $\bold P$. If $\mu<\nu$ then there are simple roots
$\a_{i_1},\ldots,\a_{i_k}\in \Pi_{\bar I}$ such that $\nu=\mu+\a_{i_1}+\cdots+\a_{i_k}$ and all
$\mu+\a_{i_1}+\cdots+\a_{i_s}$ are weights in $F_I$ for $s=1,2,\ldots,k$.
\end{lemma}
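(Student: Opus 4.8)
The plan is to construct the chain by induction on the height of $\b=\nu-\mu$, taking one simple-root step at a time and invoking Proposition \ref{string} to stay inside $P(\lambda)$. First I would record the shape of $\b$. Since $\mu,\nu\in F_I$ we have $c_i(\mu)=c_i(\nu)=m_i$ for every $i\in I$, so the coefficient of $\a_i$ in $\b$ vanishes for all such $i$; hence $\b=\sum_{j\in\overline I}n_j\a_j$ with every $n_j\ge 0$, and $\b\neq 0$ because $\mu<\nu$. Set $h=\sum_j n_j=\mathrm{ht}(\b)$ and induct on $h$, the case $h=1$ being immediate, as then $\nu=\mu+\a_{i_1}$ with $\a_{i_1}\in\Pi_{\overline I}$.

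For the inductive step I would produce a single admissible step at one of the two ends. Because the form on $E$ is positive definite and $\b\neq 0$, we have $(\b,\b)>0$, and $(\b,\b)=(\nu,\b)-(\mu,\b)$. If $(\mu,\b)<0$, then $\sum_j n_j(\mu,\a_j)<0$ forces some $j$ with $n_j>0$ and $(\mu,\a_j)<0$; Proposition \ref{string} (here $p-q<0$, so $q\ge 1$) gives $\mu+\a_j\in P(\lambda)$, while $\nu-(\mu+\a_j)=\b-\a_j\ge 0$ since $n_j\ge 1$. Otherwise $(\mu,\b)\ge 0$ forces $(\nu,\b)=(\b,\b)+(\mu,\b)>0$, so some $j$ with $n_j>0$ satisfies $(\nu,\a_j)>0$; Proposition \ref{string} then yields $\nu-\a_j\in P(\lambda)$ with $(\nu-\a_j)-\mu=\b-\a_j\ge 0$. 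In either case the selected root $\a_j$ lies in $\Pi_{\overline I}$ precisely because $n_j>0$.

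Next I would check that the new vertex stays in the face. Each step alters only the coefficient of $\a_j$ with $j\in\overline I$, so it preserves $c_i=m_i$ for all $i\in I$; since the new weight also lies in $P(\lambda)\subseteq{\bf P}$, it belongs to $F_I$. Applying the induction hypothesis to the pair $(\mu+\a_j,\nu)$ in the first case, or to $(\mu,\nu-\a_j)$ in the second, and then prepending (respectively appending) the one step just constructed, produces the required chain $\mu=\mu_0,\mu_1,\dots,\mu_k=\nu$ whose consecutive differences are simple roots in $\Pi_{\overline I}$ and all of whose terms lie in $F_I$.

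The main obstacle I anticipate is the inductive step itself: a naive attempt to always move \emph{upward} from $\mu$ fails, since an upward step $\mu\mapsto\mu+\a_j$ requires $(\mu,\a_j)<0$, which need not hold for any $j$ with $n_j>0$. The device that resolves this is the identity $(\b,\b)=(\nu,\b)-(\mu,\b)$ combined with $(\b,\b)>0$, which guarantees that at least one of the two ends admits a legal step — a downward step from $\nu$ whenever no upward step from $\mu$ is available — so the two-sided induction goes through. I note that properness of $F_I$ is not actually needed for the argument, only that $\mu<\nu$.
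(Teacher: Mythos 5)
Your proof is correct and follows essentially the same route as the paper's: induction on $\mathrm{ht}(\nu-\mu)$, using positive-definiteness of the form to guarantee that either an upward step from $\mu$ (when $(\mu,\a_j)<0$) or a downward step from $\nu$ (when $(\nu,\a_j)>0$) is legal via Proposition \ref{string}, with membership in $F_I$ preserved because the stepping root lies in $\Pi_{\overline I}$. The only cosmetic difference is that you case on the sign of $(\mu,\b)$ while the paper first selects $j_0$ with $(\nu-\mu,\a_{j_0})>0$ and then cases on the sign of $(\nu,\a_{j_0})$.
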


\begin{proof}  Let $\nu-\mu=\sum_{j\in J} a_j\a_j$ where
$J\subseteq {\bf {n}}$ and $a_j$ are positive integers for all $j\in J$. We claim that $I \cap J = \emptyset$  and $(\nu-\mu, \a_{j_0})>0$ for some $j_0\in J$. Otherwise, if $i\in I\cap J$, then $c_i(\mu) =m_i-a_i<m_i$, which contradicts that $\mu\in F_I$. If $(\nu-\mu, \a_j)\le 0$ for all $j\in J$, then $(\nu-\mu, \nu-\mu)=0$, and hence $\nu=\mu$, which is a contradiction.

Use induction on the
height of $\nu-\mu$:
$
{\rm ht}(\nu-\mu)=\sum_{j\in J}a_j.
$
If ${\rm ht}(\nu-\mu)=1$, then the result is true.
Assume the result in Lemma \ref{chainlemma} is true for ${\rm ht}(\nu-\mu)=k-1$ where
$k>1$. We show that the result is true for ${\rm
ht}(\nu-\mu)=k$ case by case.

{\bf Case 1}: $(\nu, \a_{j_0})>0$.

It follows from Proposition \ref{string} that
$\nu-\a_{j_0}$ is a weight in $F_I$, and $\mu<\nu-\a_{j_0}$ with ${\rm
ht}(\nu-\a_{j_0}-\mu)=k-1>0$. Applying the induction hypothesis, we
have $\nu-\a_{j_0}=\mu+\a_{i_1}+\cdots+\a_{i_{k-1}}$ such that all
$\mu+\a_{i_1}+\cdots+\a_{i_s}$ are weights in $F_I$ for $s=1,2,\ldots,k-1$.
Let $\a_{i_{k}}=\a_{j_0}$. Then $\nu=\mu+\a_{i_1}+\cdots+\a_{i_{k-1}}+\a_{i_k}$ and
all $\mu+\a_{i_1}+\cdots+\a_{i_s}$ are weights in $F_I$ for
$s=1,2,\ldots,k$.

{\bf Case 2}: $(\nu, \a_{j_0})\le 0$.

In this case, $(\mu, \a_{j_0})<(\nu, \a_{j_0})\le 0$. Then
$\mu+\a_{j_0}$ is a weight in $F_I$ by Proposition \ref{string}.
Therefore, ${\rm ht}(\nu-(\mu+\a_{j_0}))=k-1>0$ and $\mu+\a_{j_0}<\nu$.
The induction hypothesis shows that
$\nu=(\mu+\a_{j_0})+\a_{i_1}+\cdots+\a_{i_{k-1}}$ where all
$(\mu+\a_{j_0})+\a_{i_1}+\cdots+\a_{i_s}$ are weights in $F_I$ for
$s=1,2,\ldots,k-1$. Reorder the indices by setting $\a_{i_1}=\a_{j_0}$
and $\a_{i_t}=\a_{i_{t-1}}$ for $t=2, \dots, k$. Then we have
$\nu=\mu+\a_{i_1}+\cdots+\a_{i_k}$ and all $\mu+\a_{i_1}+\cdots+\a_{i_s}$ are
weights in $F_I$ for $s=1,2,\ldots,k$. \hfill $\Box$

\end{proof}

The following lemma is easy to understand by intuition, however, its proof is not trivial and the authors don't find it in any
existing literature.

\begin{lemma} \label{descent}Assume that $w\lambda=
\lambda-\sum_{j\in J}a_j\a_j$ where $w\in W$, $a_j$ are positive integers, and
$\a_j\in \Pi$ for all $j\in J$. Then there exists $u\in W_J$ such
that $w\lambda=u\lambda$.
\end{lemma}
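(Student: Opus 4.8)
The plan is to prove the stronger statement that $w\lambda$ actually lies in the $W_J$-orbit of $\lambda$; once this is known, choosing $u\in W_J$ with $u\lambda=w\lambda$ is immediate. Write $\mu=w\lambda$. By Proposition \ref{bki} the set $\Phi_J=\Phi\cap\operatorname{span}\{\alpha_j\mid j\in J\}$ is a root system whose Weyl group is $W_J$ and whose simple system is $\{\alpha_j\mid j\in J\}$, so the $W_J$-orbit of $\mu$ contains a (unique) $W_J$-dominant element $\mu^{+}$; that is, $\mu^{+}=v\mu$ for some $v\in W_J$ and $(\mu^{+},\alpha_j)\ge 0$ for all $j\in J$. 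I claim $\mu^{+}=\lambda$. Granting the claim, $v(w\lambda)=v\mu=\mu^{+}=\lambda$ gives $w\lambda=v^{-1}\lambda$, so $u=v^{-1}\in W_J$ is the desired element.

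The first key step is to pin down the difference $\lambda-\mu^{+}$ from two independent directions. Since each generator $s_{\alpha_j}$ with $j\in J$ changes a weight by an integer multiple of $\alpha_j$, the element $v\in W_J$ satisfies $\mu^{+}-\mu\in\sum_{j\in J}\mathbb{Z}\alpha_j$. Combining this with the hypothesis $\lambda-\mu=\sum_{j\in J}a_j\alpha_j$ shows that $\lambda-\mu^{+}$ is supported only on $\{\alpha_j\mid j\in J\}$. On the other hand $\mu^{+}=vw\lambda\in W\lambda\subseteq P(\lambda)$, and since $\lambda$ is dominant it is the unique maximal element of the saturated set $P(\lambda)$, so $\mu^{+}\le\lambda$ and $\lambda-\mu^{+}$ is a non-negative combination of all simple roots. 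Intersecting the two conclusions yields $\lambda-\mu^{+}=\sum_{j\in J}b_j\alpha_j$ with every $b_j\ge 0$.

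The second key step is a norm argument that forces all $b_j=0$. Because $\mu^{+}\in W\lambda$ and $W$ preserves the inner product, $(\mu^{+},\mu^{+})=(\lambda,\lambda)$, hence $(\lambda-\mu^{+},\lambda+\mu^{+})=0$. Writing $\lambda-\mu^{+}=\sum_{j\in J}b_j\alpha_j$, dominance of $\lambda$ gives $(\lambda-\mu^{+},\lambda)=\sum_{j\in J}b_j(\alpha_j,\lambda)\ge 0$, while $W_J$-dominance of $\mu^{+}$ gives $(\lambda-\mu^{+},\mu^{+})=\sum_{j\in J}b_j(\alpha_j,\mu^{+})\ge 0$; since these two non-negative quantities sum to $(\lambda-\mu^{+},\lambda+\mu^{+})=0$, both vanish. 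Therefore $(\lambda-\mu^{+},\lambda-\mu^{+})=(\lambda-\mu^{+},\lambda)-(\lambda-\mu^{+},\mu^{+})=0$, so $\lambda=\mu^{+}$, completing the claim and the lemma. I expect the main obstacle to be the first step: the support condition and the positivity of the coefficients of $\lambda-\mu^{+}$ arise from genuinely different mechanisms (the reflection action of $W_J$ versus the maximality of $\lambda$ in its full $W$-orbit), and it is their combination that confines $\lambda-\mu^{+}$ to the cone $\sum_{j\in J}\mathbb{Z}_{\ge 0}\alpha_j$; after that, the equal-norm identity together with the dominance inequalities $(\alpha_j,\lambda)\ge 0$ and $(\alpha_j,\mu^{+})\ge 0$ closes the argument cleanly.
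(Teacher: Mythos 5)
Your proof is correct, but it takes a genuinely different route from the paper's. The paper proves Lemma \ref{descent} by induction on $\ell(w)$: it splits $w=w^Jw_J$ via Proposition \ref{decomposition}, takes a reduced expression of $w^J$ ending in $s_{\a_r}$, and analyzes the sign of $(w\lambda, w^J\a_r)$ to strip off one reflection at a time, invoking Proposition \ref{string} and a support argument to see that $w^J\a_r$ lies in $\sum_{j\in J}\mathbb{Z}\a_j$. You instead argue non-inductively: you pass to the $W_J$-dominant representative $\mu^+=v w\lambda$ of the $W_J$-orbit, show $\lambda-\mu^+=\sum_{j\in J}b_j\a_j$ with $b_j\ge 0$ (combining the support constraint from the $W_J$-action with $\mu^+\le\lambda$), and then use the equal-norm identity $(\lambda-\mu^+,\lambda+\mu^+)=0$ together with the two dominance inequalities $(\a_j,\lambda)\ge 0$ and $(\a_j,\mu^+)\ge 0$ to force $\lambda=\mu^+$. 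Every step checks out: note in particular that you only need $(\mu^+,\a_j)\ge 0$ for $j\in J$, which is exactly where $\lambda-\mu^+$ is supported, and that $\mu^+-w\lambda\in\operatorname{span}\{\a_j\mid j\in J\}$ holds for any element of $W_J$ without appealing to integrality. Your argument is shorter and conceptually cleaner (it is the classical ``norm trick'' showing a dominant weight is the largest in its orbit, localized to the parabolic $W_J$), and it avoids the reduced-expression bookkeeping entirely; the paper's induction is more constructive in that it exhibits $u$ as an explicit product of reflections, which is in the spirit of the neighboring Lemma \ref{chainlemma} but is not needed for any later application.
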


\begin{proof}
By Proposition \ref{decomposition}, $w =w^J w_J$ where
$w^J\in W^J$ and $w_J\in W_J$ with $l(w) =l(w^J)+l(w_J)$. We apply
induction on the length $l(w)$.

If $l(w)=0$ then result is trivial. If $l(w)=1$ then $w=s_{\a_i}$ for some $\a_i\in \Pi$. If
$i\in J$, then we are done. If $i\notin J$, then $\a_i$ is linearly independent of $\{\a_j \mid j\in
J\}$. But
\[
w\lambda = s_{\a_i}\lambda =\lambda-\frac{2(\lambda,
\a_i)}{(\a_i,\a_i)}\a_i =\lambda-\sum_{j\in J}a_j\a_j.
\]
So, $J$ must be empty, and then $W_J = 1$. Therefore, $w\lambda=\lambda$.

Assume that $l(w)>1$. Let $w^J=s_{\a_1}\dots s_{\a_r}$ be a reduced
expression. If $w^J=1$ then we are done. If $w^J\not=1$ then $r\ge
1$. Then $w^J\a_r<0$ by \cite[Section 4.3, Theorem B]{RK} and
$r\notin J$. Write $w_J\lambda =\lambda-\sum_{i\in J}b_i\a_i$
where $b_i\ge 0$ for all $i\in J$. We have
\begin{eqnarray*}
 (w\lambda,\  w^J\a_r)&=&(w^J w_J\lambda,\ w^J\a_r)=(w_J\lambda,\
 \a_r)\\
 &=&(\lambda-\sum_{i\in J}b_i\a_i,\ \a_r)\ge 0{\color{blue},}
 \end{eqnarray*}
 since $r\notin J$ and $(\a_i, \a_r)\le 0$ for $i\in J$.
 Divide the discussion into two cases.

{\bf Case 1}: $(w\lambda,\ w^J\a_r)=0$. We have
\[(\lambda,w_J^{-1}\a_r)=(w_J\lambda, \a_r)=0.\]
It follows that
\[
s_{w_J^{-1}\a_r}\lambda = w_J^{-1}s_{\a_r}w_J\lambda =\lambda,
\]
and so $s_{\a_r}w_J\lambda =w_J\lambda$. Hence,
\[
    w\lambda=w^J w_J\lambda = s_{\a_1}\dots s_{\a_{r-1}}w_J\lambda.
\]
But $l(s_{\a_1}\dots
s_{\a_{r-1}}w_J)<l(w)$. From the induction hypothesis, there exists an element $u\in
W_J$ such that $s_{\a_1}\dots s_{\a_{r-1}}w_J\lambda=u\lambda$.
Thus, $w\lambda=u\lambda$.

{\bf Case 2}: $(w\lambda,\ w^J\a_r)>0$. Now, $(\lambda-\sum_{j\in J}a_j\a_j,\ w^J\a_r)> 0$. Then,
\[\mu=\lambda-\sum_{j\in J}a_j\a_j-\ w^J\a_r\] is
a weight by Proposition \ref{string}.  Since $\mu\le\lambda$ and
$w^J\a_r$ is a negative root, $w^J\a_r$ has to be linear
combination of $\a_j$'s for $j\in J$. Write $\gamma = w^J\a_r$.
Then $w^Js_{\a_r}=s_{\gamma}w^J$ and
\[
s_{\gamma}w\lambda = w^Js_{\a_r}w_J\lambda =\lambda-\sum_{j\in
J}c_j\a_j\]
 where $c_j\ge 0$. Since $l(w^Js_{\a_r}w_J)<l(w)$, by
the induction hypothesis, there exists $v\in W_J$ such that
$w^Js_{\a_r}w_J\lambda =s_{\gamma}w\lambda = v\lambda$. Let
$u=s_{\gamma}v\in W_J$. Therefore, $w\lambda =
s_{\gamma}v\lambda=u\lambda$. This completes the proof. \hfill $\Box$

\end{proof}

For a subset $\Sigma$ of $\{-\lambda\}\cup\Pi$, denote by
$\Gamma(\Sigma)$ the subdiagram of the extended Dynkin diagram
having $\Sigma$ as vertices, and $\Gamma^0(\Sigma)$ the connected
component of $\Gamma(\Sigma)$ containing $\{-\lambda\}$. Denote by
$V_I$ the set of vertices of $\Gamma^0(\Pi_{\overline I} \cup
\{-\lambda\})$. Let $\cI = \Pi \cap V_I$ and let $\I0$ be the set
of indices of the vertices in $\cI$. Let $E_{F_I}$ be the space spanned by $\{\mu - \nu \mid \mu, \nu\in F_I\}$.

\begin{theorem} \label{keytheorem} Let $I\subseteq {\bf {n}}$ and $u_0$ be the longest
element in $W_{\overline I^0}$. If $F_I$ is a standard parabolic
face of $\bf P$, then

\begin{itemize}
\item[{\rm(1)}] $F_I$ is the convex hull of
$W_{\overline I^0}\lambda$,

\item[{\rm(2)}]  $u_0\lambda$ is the least
element in $P_I$, and

\item[{\rm(3)}] $E_{F_I}$ is spanned by the simple roots in $\Pi_{\I0}$
and $\dim E_{F_I} =|\I0|$.

\end{itemize}
\end{theorem}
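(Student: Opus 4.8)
The plan is to establish the three claims in order, with (1) providing the geometric backbone for (2) and (3). Throughout, write $A=\overline I^0$ and $B=\overline I\setminus \overline I^0$, so $\overline I=A\sqcup B$, and recall that $A\cup\{-\lambda\}$ is exactly the connected component $V_I$ of $-\lambda$ in the extended diagram on $\Pi_{\overline I}\cup\{-\lambda\}$.

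First I would identify the vertices of $F_I$. Since every weight satisfies $w\lambda\le\lambda$, I write $w\lambda=\lambda-\sum_j b_j\alpha_j$ with $b_j\ge 0$; then $c_i(w\lambda)=m_i$ for $i\in I$ forces $b_i=0$, so the descent from $\lambda$ to a vertex $w\lambda$ of $F_I$ uses only simple roots indexed by $\overline I$. Lemma \ref{descent} then yields $u\in W_{\overline I}$ with $w\lambda=u\lambda$, so the vertex set of $F_I$ is exactly $W_{\overline I}\lambda$. The decisive step is to replace $\overline I$ by $\overline I^0$: because $A\cup\{-\lambda\}$ is a connected component, no $\alpha_b$ with $b\in B$ is joined to $-\lambda$, so $(\lambda,\alpha_b)\le 0$ by Definition \ref{extendedDiagram}, and dominance of $\lambda$ forces $(\lambda,\alpha_b)=0$, whence $s_{\alpha_b}\lambda=\lambda$. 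Likewise there is no edge between $A$ and $B$, so $\Pi_A\perp\Pi_B$ and $W_{\overline I}=W_A\times W_B$ with $W_B$ fixing $\lambda$. Hence $W_{\overline I}\lambda=W_A\lambda=W_{\overline I^0}\lambda$, and $F_I$, being the convex hull of its vertices, equals the convex hull of $W_{\overline I^0}\lambda$. This proves (1). \textbf{I expect this orthogonality reduction to be the main obstacle}, since it is exactly where the definition of $\overline I^0$ and the dominance of $\lambda$ enter, and it rests on the nontrivial Lemma \ref{descent}.

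For (2), observe from (1) that $F_I\subseteq\lambda+\mathrm{span}(\Pi_A)$, so every $\mu\in P_I$ has $\lambda-\mu\in\mathbb{Z}_{\ge 0}\Pi_A$, and $P_I$ is stable under $W_A=W_{\overline I^0}$ (elements of $W_A$ alter only $\alpha_a$-coefficients with $a\in A\subseteq\overline I$, hence preserve $c_i=m_i$ for $i\in I$). Take a minimal element $\mu_0$ of $(P_I,\le)$. For each $a\in A$ the vector $\mu_0-\alpha_a$ still has $c_i=m_i$ for $i\in I$, so by minimality it is not a weight; Proposition \ref{string} then gives $(\mu_0,\alpha_a)\le 0$, i.e. $\mu_0$ is $W_A$-antidominant. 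Setting $\delta=u_0\mu_0$, a $W_A$-dominant weight in $P_I$ with $\lambda-\delta\in\mathbb{Z}_{\ge 0}\Pi_A$, and using that $-u_0$ permutes $\Pi_A$, I get $\mu_0=u_0\delta=u_0\lambda+\sum_{a\in A}c_a(-u_0\alpha_a)\ge u_0\lambda$. Since $u_0\lambda\in P_I$ and $\mu_0$ is minimal, $\mu_0=u_0\lambda$; thus $u_0\lambda$ is the unique minimal element, hence the least element of $P_I$.

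For (3), part (1) already gives $E_{F_I}\subseteq\mathrm{span}(\Pi_{\overline I^0})$, and $\Pi_{\overline I^0}$ is linearly independent, so only the reverse inclusion remains. I would use that $E_{F_I}$ is $W_{\overline I^0}$-stable (as $F_I$ is the $W_{\overline I^0}$-stable convex hull of $W_{\overline I^0}\lambda$) and decompose $\Pi_{\overline I^0}$ into the Dynkin components $A_1,\dots,A_r$ obtained by deleting $-\lambda$, so that $\mathrm{span}(\Pi_{\overline I^0})=\bigoplus_k\mathrm{span}(\Pi_{A_k})$ with each summand an irreducible $W_{A_k}$-module. Connectivity of $\Gamma^0$ through $-\lambda$ forces each $A_k$ to contain some $\alpha$ with $(\lambda,\alpha)>0$, so $\lambda-s_\alpha\lambda=\frac{2(\lambda,\alpha)}{(\alpha,\alpha)}\alpha$ is a nonzero element of $E_{F_I}\cap\mathrm{span}(\Pi_{A_k})$. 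As this intersection is a nonzero $W_{A_k}$-stable subspace of an irreducible module, it equals $\mathrm{span}(\Pi_{A_k})$; summing over $k$ yields $E_{F_I}=\mathrm{span}(\Pi_{\overline I^0})$ and $\dim E_{F_I}=|\overline I^0|$.
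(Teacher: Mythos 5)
Your proposal is correct, and parts (1) and (2) follow essentially the paper's own route: (1) rests, as in the paper, on Lemma \ref{descent} together with the observation that $\lambda$ is orthogonal to every simple root of $\overline I$ outside the connected component of $\{-\lambda\}$, so that the factor $W_B$ acts trivially on $\lambda$ (the paper decomposes the support $J$ of $\lambda-w\lambda$ rather than all of $\overline I$, but this is cosmetic); and your computation in (2), $\mu_0=u_0(u_0\mu_0)=u_0\lambda+\sum_a c_a(-u_0\a_a)$ with $-u_0$ permuting $\Pi_{\I0}$, is exactly the paper's $u_0\lambda-w\lambda=u_0(\lambda-u_0^{-1}w\lambda)$, applied to an arbitrary weight of $P_I$ instead of a vertex (your detour through minimal, $W_A$-antidominant elements is harmless but unnecessary, since the displayed identity already shows $u_0\lambda$ is a lower bound for all of $P_I$). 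Part (3) is where you genuinely diverge. The paper proves the reverse inclusion $\mathrm{Span}(\Pi_{\I0})\subseteq E_{F_I}$ combinatorially: it applies Lemma \ref{chainlemma} to the pair $u_0\lambda<\lambda$ to realize $E_{F_I}$ as the span of the simple roots occurring in $\lambda-u_0\lambda$, and then shows every $j\in\I0$ occurs with positive coefficient by walking a shortest path from $\a_j$ to $\{-\lambda\}$ in the extended diagram. You instead decompose $\Pi_{\I0}$ into Dynkin components $A_k$, note each $A_k$ meets the star of $\{-\lambda\}$ so that $\lambda-s_\a\lambda$ gives a nonzero vector of $E_{F_I}\cap\mathrm{Span}(\Pi_{A_k})$, and invoke irreducibility of the reflection representation of $W_{A_k}$ on $\mathrm{Span}(\Pi_{A_k})$ to conclude. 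Both arguments are sound; yours trades the explicit chain construction (which the paper also reuses later, e.g.\ in the facet criterion and in Corollary \ref{facesEqual}, where positivity of all coefficients of $\lambda-u_0\lambda$ is what is actually needed) for a shorter, more structural appeal to a standard irreducibility fact, at the cost of not producing that positivity statement as a byproduct.
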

\begin{proof}
For (1), let $w\in W_{\overline I^0}$. Then $w\lambda
=\lambda-\sum_{j\in{\I0}}a_j \a_j \in P_I$, since $\I0\cap I=\emptyset$.
Notice that $P_I$ is the set of weights in $F_I$. We obtain that $w\lambda\in F_I$, and the convex hull of $W_{\overline I^0}\lambda$ is contained
in $F_I$.

Conversely, for any $w\in W$ assume that $w\lambda
=\lambda-\sum_{j\in J}a_j\a_j$ with $a_j>0$. Then $J\subseteq
\overline I$, since $c_i(w\lambda) =c_i(\lambda)=m_i$ for all $i\in
I$. By Proposition \ref{descent}, there exists an element
$u\in W_J$ such that $u\lambda =w\lambda$. If $\Pi_J\cup
\{-\lambda\}$ is connected in the extended Dynkin diagram, then we
are done. Otherwise, we can assume that $J = J'\cup \I0 $. Notice that
$\cI\cup \{-\lambda\}$ is a connected component in the Dynkin
diagram. Then $W_J\cong W_{\overline I^0}\times
W_{J'} $. Write $u=xy$, where $x\in W_{\overline I^0}$
and $y\in W_{J'}$. Thus, $w\lambda =u\lambda=xy\lambda=x\lambda$
since $(\lambda, \a_j)=0$ for all $j\in J'$. In other words, $w\lambda\in W_{\overline I^0}\lambda$. It follows that $F_I$ is included in the convex hull of $W_{\overline I^0}\lambda$. This completes the proof of (1).

To prove (2),  it suffices to show that $u_0\lambda<w\lambda$ for
all $w\in W_{\overline I^0}$, since other weights in $P_I$ are
linear combinations of $W_{\overline I^0}\lambda$ with non-negative
coefficients whose sum is equal to $1$. Thanks to $\lambda\ge u_0^{-1}w\lambda$, we have
$\lambda-u_0^{-1}w\lambda=\sum_{j\in {\I0}}a_j\a_j$ with $a_j\ge 0$.
 It follows that
\[
\begin{aligned}
    u_0\lambda- w\lambda    &= u_0(\lambda-u_0^{-1}w\lambda) \\
                                &=  -\sum_{j\in {\I0}}a_{j}(-u_0(\a_{j}))
\end{aligned}
\]
where $-u_0(\a_j)\in \cI$ since $u_0\cI
= -\cI$. So $u_0\lambda\le w\lambda$.
This proves (2).

We now prove (3). If $u_0\lambda=\lambda$, then $\lambda$ is the least element of
$W_{\overline I^0}\lambda$ by (2) and $W_{\overline I^0}\lambda=\{\lambda\}$. It follows that $\cI=\emptyset$ and $F_I = \{\lambda\}$.
If $u_0\lambda<\lambda$, let
\begin{eqnarray}\label{Li1}
\lambda-u_0\lambda=\sum_{j\in \I0} a_j\a_j .
\end{eqnarray}
By Lemma \ref{chainlemma}, there are simple roots $\a_{i_1}, \dots,
\a_{i_k}$ such that $u_0\lambda+\a_{i_1}+\dots+\a_{i_s}$ are weights in $F_I$
for $s=1, \dots, k$. Any element $\lambda-w\lambda$ is a linear combination of $\a_{i_1}, \dots, \a_{i_k}$, for $w\in
W_{\overline I^0}\lambda$. Hence $\a_{i_1}, \dots, \a_{i_k}$ span
$E_{F_I}$. But $\a_{i_s}\in\cI$ by (1). The dimension of $F_I$
is the number of nonzero terms on the right-hand side of
(\ref{Li1}).
We claim that $a_j>0$ for all $j\in \I0$. To
this end, for any $\a_j\in \cI$, we can find a shortest connected
path: \,$\a_j,\a_{j_1}, \dots, \a_{j_m}, \{-\lambda\}$, \,from $\a_j$ to $\{-\lambda\}$ in the extended Dynkin diagram.
Then the coefficient of $\a_j$ in $\lambda -s_{\a_j}s_{\a_{j_1}}\dots s_{\a_{j_m}}\lambda$  is positive.  Hence $a_j>0$  in (\ref{Li1}) for $j\in\I0$, since $s_{\a_j}s_{\a_{j_1}}\dots s_{\a_{j_m}}\lambda\ge u_0\lambda$. This completes the proof. \hfill $\Box$
\end{proof}

Note that for different subsets $I, J\subseteq {\bf {n}}$, it is possible
that  $F_I=F_J$.
\begin{corollary}\label{facesEqual}For any two subsets $I, J\subseteq {\bf {n}}$, $F_I=F_J$
if and only if $\cI = \Pi_{\overline J^0}$.
\end{corollary}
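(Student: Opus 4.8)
The plan is to derive both implications directly from Theorem \ref{keytheorem}, using part~(1) for the backward direction and part~(3) for the forward direction. Throughout, I will exploit that $\cI$ and $\Pi_{\overline J^0}$ are subsets of the root basis $\Pi$, so that questions about these sets can be reduced to linear independence of simple roots.

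For the backward implication, I would suppose $\cI = \Pi_{\overline J^0}$. Since these are subsets of $\Pi$ indexed respectively by $\I0$ and $\overline J^0$, equality of the root sets forces equality of the index sets, $\I0 = \overline J^0$, and hence $W_{\overline I^0} = W_{\overline J^0}$. By Theorem \ref{keytheorem}(1), $F_I$ is the convex hull of $W_{\overline I^0}\lambda$ and $F_J$ is the convex hull of $W_{\overline J^0}\lambda$; as these two orbits are identical, $F_I = F_J$. This direction is routine.

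For the forward implication, I would suppose $F_I = F_J$. The space $E_{F_I}$ spanned by $\{\mu - \nu \mid \mu, \nu \in F_I\}$ is intrinsic to the point set of the face, so $F_I = F_J$ immediately yields $E_{F_I} = E_{F_J}$. By Theorem \ref{keytheorem}(3), $E_{F_I}$ is spanned by $\cI$ while $E_{F_J}$ is spanned by $\Pi_{\overline J^0}$; hence $\cI$ and $\Pi_{\overline J^0}$ span the same subspace of $E$. It then remains to convert this equality of spans into an equality of root sets, which is the only step requiring a genuine argument. The key observation is that $\Pi$ is a basis of $E$, so for any subset $S\subseteq{\bf n}$ a simple root $\a_i$ lies in $\operatorname{span}\{\a_s \mid s\in S\}$ only when $i\in S$; equivalently, the span of a set of simple roots meets $\Pi$ in exactly that set. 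Applying this to both $\cI$ and $\Pi_{\overline J^0}$ recovers $\cI = \bigl(\operatorname{span}\cI\bigr)\cap\Pi = \bigl(\operatorname{span}\Pi_{\overline J^0}\bigr)\cap\Pi = \Pi_{\overline J^0}$.

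I do not expect a serious obstacle, but the point requiring care is the forward direction's reliance on two facts stacked together: first, that $E_{F_I}$ is an invariant of the face itself and not of the chosen index set $I$, so that Theorem \ref{keytheorem}(3) may legitimately be invoked for the two different descriptions of one and the same face; and second, the passage from equal spans to equal subsets of $\Pi$, which is exactly where the basis property of the simple roots is essential. Once these two observations are in place, the corollary follows with no further computation.
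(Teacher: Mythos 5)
Your proof is correct, and the backward implication is exactly the paper's (both reduce to Theorem \ref{keytheorem}(1)). The forward implication takes a slightly different route: the paper compares the \emph{least elements} of the two faces, invoking the fact---established inside the proof of Theorem \ref{keytheorem}(3), not in its statement---that $\lambda-\eta_I=\sum_{i\in\I0}a_i\a_i$ with all $a_i$ strictly positive, so that the support of $\lambda-\eta_I$ recovers $\I0$. You instead compare the \emph{linear spans} $E_{F_I}=E_{F_J}$, which by the statement of Theorem \ref{keytheorem}(3) are spanned by $\cI$ and $\Pi_{\overline J^0}$ respectively, and then pass from equal spans to equal subsets of $\Pi$ via the linear independence of the simple roots. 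The two arguments are of comparable length, but yours has the small advantage of relying only on what Theorem \ref{keytheorem} actually asserts rather than on a positivity claim buried in its proof; the paper's version, in exchange, identifies a finer invariant (the least weight itself, not just its support), which it reuses elsewhere. Your closing observation that $E_{F_I}$ must be recognized as an invariant of the face rather than of the index set $I$ is precisely the point that makes the argument legitimate, and you handle it correctly.
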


\begin{proof} $(\Rightarrow)$ Use $\eta_I$ to denote the least element of $F_I$. If $F_I=F_J$, then they have
the same least element $\eta_I(=\eta_J)$.  By the proof of (3) in
Theorem \ref{keytheorem},
\[
\lambda-\eta_I=\sum_{i\in \I0}a_i\a_i\quad{\rm and}\quad
\lambda-\eta_J =\sum_{i\in \overline J^0}b_i\a_i,
\]
where $a_i>0$ for all $i\in \I0$, and $b_i>0$ for all $i\in
\overline J^0$.  It follows that $\cI = \Pi_{\overline J^0}$.

 $(\Leftarrow)$ This is trivial by (1) of Theorem \ref{keytheorem}.
 \hfill $\Box$
\end{proof}

The theorem below follows from Theorem
\ref{keytheorem} and Corollary \ref{facesEqual}.

\begin{theorem}\label{bijection}
The standard parabolic faces of $\bold P$
are in bijection with the connected subdiagrams of the extended
Dynkin diagram that contain $\{-\lambda\}$. This bijection is an
isomorphism of posets with respect to inclusion.
\end{theorem}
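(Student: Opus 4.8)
The plan is to make the correspondence explicit and then verify it is an order-preserving bijection. To a standard parabolic face $F_I$ I assign the subdiagram $\Gamma^0(\Pi_{\overline I}\cup\{-\lambda\})$, whose vertex set is $V_I=\{-\lambda\}\cup\Pi_{\overline I^0}$; by construction this subdiagram is connected and contains $\{-\lambda\}$, so the assignment takes values in the intended target set. Well-definedness and injectivity are then immediate from Corollary \ref{facesEqual}: a subdiagram of the extended Dynkin diagram is determined by its vertex set, so two faces $F_I$ and $F_J$ have the same image exactly when $\Pi_{\overline I^0}=\Pi_{\overline J^0}$, which by the corollary is precisely the condition $F_I=F_J$.

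For surjectivity I would begin with an arbitrary connected subdiagram $\Delta$ containing $\{-\lambda\}$, write its vertex set as $\{-\lambda\}\cup\Pi_S$ for some $S\subseteq {\bf {n}}$, and set $I=\overline S$. Then $\Pi_{\overline I}=\Pi_S$, the induced subdiagram on $\Pi_S\cup\{-\lambda\}$ is exactly $\Delta$, and since $\Delta$ is already connected it coincides with the component $\Gamma^0(\Pi_{\overline I}\cup\{-\lambda\})$ of $\{-\lambda\}$. Hence $F_I$ is sent to $\Delta$, and the map is onto.

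It remains to show that the bijection and its inverse preserve inclusion, that is, $F_I\subseteq F_J$ if and only if $V_I\subseteq V_J$, equivalently $\Pi_{\overline I^0}\subseteq\Pi_{\overline J^0}$. The backward direction follows from Theorem \ref{keytheorem}(1): the inclusion $\Pi_{\overline I^0}\subseteq\Pi_{\overline J^0}$ gives $W_{\overline I^0}\subseteq W_{\overline J^0}$, hence $W_{\overline I^0}\lambda\subseteq W_{\overline J^0}\lambda$, and passing to convex hulls yields $F_I\subseteq F_J$. For the forward direction, $F_I\subseteq F_J$ forces $E_{F_I}\subseteq E_{F_J}$; by Theorem \ref{keytheorem}(3) these are the linear spans of the simple roots in $\Pi_{\overline I^0}$ and in $\Pi_{\overline J^0}$ respectively, and since the simple roots form a linearly independent set, an inclusion of such spans forces $\Pi_{\overline I^0}\subseteq\Pi_{\overline J^0}$.

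The only step that calls for any care — and where I expect the (mild) work to lie — is surjectivity, where one must check that realizing a prescribed connected subdiagram needs no hypothesis on the way $\{-\lambda\}$ is joined to $\Pi$; this rests solely on a subdiagram being its vertex set together with the inherited edges, so the choice $I=\overline S$ reproduces $\Delta$ exactly. The remainder is a direct assembly of Theorem \ref{keytheorem} and Corollary \ref{facesEqual} together with the linear independence of the simple root basis.
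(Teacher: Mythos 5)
Your proposal is correct and follows exactly the route the paper intends: it assembles Theorem \ref{keytheorem} (parts (1) and (3)) and Corollary \ref{facesEqual}, which is precisely what the paper cites in lieu of a written-out proof. The explicit map $F_I\mapsto\Gamma^0(\Pi_{\overline I}\cup\{-\lambda\})$, the well-definedness/injectivity via the corollary, the surjectivity via $I=\overline S$, and the order-isomorphism via convex hulls of $W_{\overline I^0}\lambda$ and linear independence of the simple roots are all sound.
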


This theorem is the Lie algebra version of Theorem C, which is a consequence of \cite[Theorem 4.16]{PR} and \cite[Theorem 2]{KKMS73} and \cite[Theorem 8.4]{Pu88} and \cite[Section 5]{So95}. In fact, the bijection can be explicitly described as:
$
    F_I\to \I0.
$

\begin{corollary}
All coordinate faces are distinct.
\end{corollary}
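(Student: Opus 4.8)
The plan is to read the statement off from Corollary~\ref{facesEqual}. Each coordinate face is the standard parabolic face $F_i = F_{\{i\}}$, so two coordinate faces coincide precisely when the associated connected subdiagrams agree. Writing $D_i := \Pi_{\overline{\{i\}}^0}$ for the set of simple roots lying in the component $\Gamma^0(\Pi_{\overline{\{i\}}}\cup\{-\lambda\})$ of $\{-\lambda\}$ in the extended Dynkin diagram with the node $\alpha_i$ deleted, Corollary~\ref{facesEqual} gives $F_i = F_j$ if and only if $D_i = D_j$. It therefore suffices to prove that the assignment $i\mapsto D_i$ is injective.

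The engine of the argument will be a recovery principle: from the set $D_i$ one can reconstruct the index $i$. Let $\Gamma$ denote the full extended Dynkin diagram, which is connected because $\lambda\neq 0$ forces $\{-\lambda\}$ to be joined to at least one $\alpha_k$ while the Dynkin diagram of the simple algebra $\mathfrak g$ is itself connected. Put $V_{\{i\}} = D_i\cup\{-\lambda\}$, the vertex set of the component of $\{-\lambda\}$ in $\Gamma$ after deleting $\alpha_i$. I would then show that $\alpha_i$ is the unique vertex of $\Gamma$ lying outside $V_{\{i\}}$ that is adjacent to $V_{\{i\}}$. Indeed, any edge of $\Gamma$ joining a vertex of $V_{\{i\}}$ to a vertex $w\notin V_{\{i\}}$ with $w\neq\alpha_i$ would survive the deletion of $\alpha_i$, making $w$ reachable from $\{-\lambda\}$ and contradicting $w\notin V_{\{i\}}$; hence every such edge must terminate at $\alpha_i$. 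Moreover at least one such edge exists, since $\Gamma$ is connected and $V_{\{i\}}$ is a proper vertex subset (it omits $\alpha_i$).

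Injectivity then follows immediately: if $D_i = D_j$ then $V_{\{i\}} = V_{\{j\}}$, and applying the recovery principle to both sides pins down the same unique adjacent exterior vertex, forcing $\alpha_i = \alpha_j$ and hence $i = j$. Consequently the sets $D_i$ are pairwise distinct and, by Corollary~\ref{facesEqual}, so are the coordinate faces. The one point requiring care is that the recovery principle must also cover the degenerate configurations — when $i\in J$, so that $\{-\lambda\}$ is not even adjacent to $\alpha_i$, and when deleting $\alpha_i$ isolates $\{-\lambda\}$ and $D_i=\emptyset$ — but these are handled uniformly, because the argument invokes only the defining separation property of a connected component and never any direct adjacency of $\{-\lambda\}$ to $\alpha_i$. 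I expect this purely combinatorial separation step to be the main (indeed the only) obstacle; once it is in place the corollary is a one-line consequence.
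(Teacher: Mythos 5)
Your argument is correct, but it is not the route the paper takes. The paper proves the corollary directly: it looks at the least element $\eta_i$ of $F_i$ (which exists by Theorem \ref{keytheorem}(2)) and shows via Proposition \ref{string} that $(\eta_i,\alpha_j)\le 0$ for all $j\ne i$ while $(\eta_i,\alpha_i)>0$, so the index $i$ is recovered from $\eta_i$ and the least elements, hence the faces, are pairwise distinct. You instead route everything through Corollary \ref{facesEqual}, reducing the statement to the injectivity of $i\mapsto D_i=\Pi_{\overline{\{i\}}^0}$, and prove that injectivity by a purely graph-theoretic ``recovery principle'': since the extended Dynkin diagram $\Gamma$ is connected and $V_{\{i\}}$ is a proper subset of its vertices separated from the rest only through $\alpha_i$, the vertex $\alpha_i$ is the unique exterior vertex adjacent to $V_{\{i\}}$ and is therefore determined by $D_i$. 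Both arguments are sound; yours has the merit of isolating the combinatorial content (it shows the map $I\mapsto \overline{I}^0$ of Theorem \ref{bijection} is already injective on singletons, using nothing about weights or inner products beyond what Corollary \ref{facesEqual} encapsulates), whereas the paper's is self-contained at the level of the weight string property and does not depend on Corollary \ref{facesEqual} at all. Your treatment of the degenerate cases ($D_i=\emptyset$, or $i$ in the type $J$) is handled correctly since, as you note, the separation argument never uses adjacency of $\{-\lambda\}$ to $\alpha_i$ itself.
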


\begin{proof}
Let  $\eta_i$ be the least element of $F_i$. We have
 $(\eta_i, \a_j)\le 0$ for all $j \not= i$. Otherwise
 $\eta_i-\a_j$ is a weight in
 $P_i\subseteq F_i$ by Proposition \ref{string}
and $\eta_i$ would not be minimal. This yields $(\eta_i, \a_i)> 0$
since $(\eta_i, \eta_i)>0$. Thus $\eta_i \neq \eta_j$, and
hence $P_i\not= P_j$ if $i\not= j$. The coordinate
faces $F_i$ and $F_j$ are then distinct since $P_i$ is exactly
the set of weights in $F_i$ for all $i\in {\bf {n}}$.
\hfill $\Box$

\end{proof}

The following is a criterion that tests whether a coordinate face is a facet.
\begin{corollary} Let $F_i$ be a coordinate face and $\eta_i$ be the
least element in $F_i$. Then $F_i$ is a facet if and only if
$(\eta_i, \vw_j)\not=m_j$ for all $j\not=i$.
\end{corollary}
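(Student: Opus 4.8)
The plan is to convert the facet condition into a dimension count, then identify that dimension with the support of $\lambda-\eta_i$, and finally recognize the stated hypothesis as the assertion that this support is as large as possible. First I would record that $\bold P$ is full-dimensional of dimension $n$: taking $I=\emptyset$ in Theorem \ref{keytheorem}(3), the node $\{-\lambda\}$ is joined by an edge to the (connected) Dynkin diagram of the simple Lie algebra $\mathfrak g$, so $\Gamma^0(\Pi\cup\{-\lambda\})$ contains all of $\Pi$ and hence $\dim E_{F_\emptyset}=|\overline{\emptyset}^0|=n$; since $F_\emptyset=\bold P$, we get $\dim\bold P=n$. Consequently a face of $\bold P$ is a facet exactly when it is $(n-1)$-dimensional. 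Because the affine hull of $F_i$ is a translate of $E_{F_i}$, Theorem \ref{keytheorem}(3) gives $\dim F_i=\dim E_{F_i}=|\overline{\{i\}}^0|$, and as $\overline{\{i\}}^0\subseteq {\bf {n}}\setminus\{i\}$ we conclude that $F_i$ is a facet if and only if $\overline{\{i\}}^0={\bf {n}}\setminus\{i\}$.

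Second, I would rewrite the numerical hypothesis in terms of coefficients. Using $(\a_k,\vw_j)=\delta_{kj}$ one has $(\mu,\vw_j)=c_j(\mu)$ for every weight $\mu$, and in particular $m_j=(\lambda,\vw_j)=c_j(\lambda)$. By Theorem \ref{keytheorem}(1), $\eta_i=u_0\lambda$ with $u_0\in W_{\overline{\{i\}}^0}$, so $\lambda-\eta_i=\sum_{j\in\overline{\{i\}}^0}a_j\a_j$ is supported inside $\overline{\{i\}}^0$; hence $c_j(\eta_i)\le m_j$ for all $j$, and the condition $(\eta_i,\vw_j)\ne m_j$ is equivalent to the strict inequality $c_j(\eta_i)<m_j$.

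Third, I would pin down exactly where strict inequality occurs. The positivity established inside the proof of Theorem \ref{keytheorem}(3) gives $a_j>0$ for every $j\in\overline{\{i\}}^0$, so $c_j(\eta_i)=m_j-a_j<m_j$ precisely when $j\in\overline{\{i\}}^0$, while $c_j(\eta_i)=m_j$ when $j\notin\overline{\{i\}}^0$. Therefore the requirement that $c_j(\eta_i)<m_j$ hold for all $j\ne i$ is equivalent to every index $j\ne i$ belonging to $\overline{\{i\}}^0$, i.e. to $\overline{\{i\}}^0={\bf {n}}\setminus\{i\}$. Combined with the first paragraph, this is exactly the facet condition, which finishes the argument.

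I do not expect a genuine obstacle here; the work is almost entirely bookkeeping. The two crucial inputs---that $\lambda-\eta_i$ is supported exactly on $\overline{\{i\}}^0$, with strictly positive coefficients there and zero coefficients off it---are supplied respectively by parts (1) and (3) of Theorem \ref{keytheorem}, and the only thing to watch is the degenerate case $\overline{\{i\}}^0=\emptyset$ (where $\eta_i=\lambda$ and $F_i=\{\lambda\}$), in which both sides of the equivalence fail for $n\ge 2$, consistently with $F_i$ not being a facet.
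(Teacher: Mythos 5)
Your argument is correct and follows essentially the same route as the paper: both directions reduce to Theorem \ref{keytheorem}(3), using that $\lambda-\eta_i$ is supported exactly on $\overline{\{i\}}^0$ with strictly positive coefficients there, so that the condition $(\eta_i,\vw_j)\ne m_j$ for all $j\ne i$ is equivalent to $\dim F_i=|\overline{\{i\}}^0|=n-1$. Your version merely makes explicit two points the paper leaves implicit (the full-dimensionality of $\bold P$ and the degenerate case $\overline{\{i\}}^0=\emptyset$), which is harmless.
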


\begin{proof} ($\Rightarrow$) By Theorem \ref{keytheorem}, if $F_i$ is a facet
then $\dim \,F_i=|\cI| =n-1$ where $I=\{i\}$. This implies that
$\cI=\Pi\setminus \{i\}$ and $\eta_i=\lambda-\sum_{\a_j\in\cI}
a_j\a_j$ with $a_j>0$ for all $j\not= i$. Therefore, $(\eta_i,
\vw_j)\not=m_j$ for all $j\not= i$.

($\Leftarrow$) If $(\eta_i, \vw_j)=m_j-a_j \not=m_j$ for all
$j\not=i$, then $a_j>0$ for all $j\not=i$. By Lemma
\ref{chainlemma} or the proof of Theorem \ref{keytheorem}, $E_{F_i}$ is
spanned by $\{\a_j \mid j\in {\bf {n}}\setminus\{i\}\}$. Hence $\dim F_i=n-1$. \hfill $\Box$
\end{proof}

\begin{lemma}\label{stringSum}
Let $\a$ be a root in $\Phi$ and $\mu$ be a weight in
$P(\lambda)$ and denote the $\a$-string through $\mu$ by
$\a-(\mu)$.  Then
\[
\sum_{\g\in \a-(\mu)} (\g, \a)=0.
\]
\end{lemma}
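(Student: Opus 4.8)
The plan is to exploit the symmetry of the $\a$-string under the reflection $s_\a$. Write the $\a$-string through $\mu$ as $\a\text{-}(\mu)=\{\mu+k\a \mid -p\le k\le q\}$, where $\mu-p\a$ and $\mu+q\a$ are its two ends. The first and only substantive task is to record that this finite set of weights is stable under $s_\a$. Using the string-length identity $\frac{2(\mu,\a)}{(\a,\a)}=p-q$, a one-line computation gives $s_\a(\mu+k\a)=\mu+(q-p-k)\a$; as $k$ runs over $\{-p,\ldots,q\}$ the index $q-p-k$ runs over the very same interval, so $s_\a$ simply reflects the string about its midpoint and hence permutes $\a\text{-}(\mu)$. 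Equivalently, the midpoint $\mu_0=\mu+\tfrac{q-p}{2}\a$ satisfies $(\mu_0,\a)=0$, and the string is symmetric about $\mu_0$.

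Granting this invariance, the conclusion is immediate. Since $s_\a$ is orthogonal with $s_\a\a=-\a$, we have $(s_\a\g,\a)=(\g,s_\a\a)=-(\g,\a)$ for each $\g$, and because $s_\a$ merely permutes the summands,
\[
\sum_{\g\in \a\text{-}(\mu)}(\g,\a)=\sum_{\g\in \a\text{-}(\mu)}(s_\a\g,\a)=-\sum_{\g\in \a\text{-}(\mu)}(\g,\a),
\]
which forces the sum to vanish.

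A routine alternative, should one prefer to avoid the reflection language, is to sum the arithmetic progression directly: $\sum_{k=-p}^{q}(\mu+k\a,\a)=(p+q+1)(\mu,\a)+(\a,\a)\sum_{k=-p}^{q}k$, and since $\sum_{k=-p}^{q}k=\tfrac{(q-p)(p+q+1)}{2}$ while $(\a,\a)(q-p)=-2(\mu,\a)$ by the same string-length identity, the two terms cancel exactly.

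The only point requiring care is that Proposition \ref{string} is phrased for a simple root $\a_i$, whereas here $\a$ is an arbitrary root. I would bridge this by choosing $w\in W$ with $w\a\in\Pi$ (every root is $W$-conjugate to a simple root): since $P(\lambda)$ is $W$-stable, $w$ carries the $\a$-string through $\mu$ bijectively onto the $(w\a)$-string through $w\mu$, and it preserves inner products, so both the length identity and the vanishing of the sum transfer back from the simple-root case. Everything past this reduction is elementary, and I expect the invariance bookkeeping, rather than any estimate, to be the whole of the work.
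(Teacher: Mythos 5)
Your proof is correct and rests on the same idea as the paper's: the $\a$-string is symmetric about its $\a$-orthogonal midpoint, which the paper exploits by pairing weights symmetric about the middle and you exploit by observing $s_\a$-invariance and the sign flip $(s_\a\g,\a)=-(\g,\a)$. Your extra care in reducing the string-length identity for an arbitrary root to the simple-root case of Proposition \ref{string} via $W$-conjugacy is a point the paper passes over silently, but the substance of the argument is the same.
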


\begin{proof}
Without loss of generality, we assume that $\mu$ is
the start of its $\a$-string.  Then the weight string is
\[
\mu,\, \mu+\a,\, \ldots, \mu+q\a, \quad \mbox{where }~ q=-\frac{2(\mu,\a)}{(\a, \a)}.
\]
If $q$ is even, the middle weight $\mu-\frac{(\mu,\a)}{(\a, \a)}\a$ is
orthogonal to $\a$, and the sum of the two symmetric weights
about the middle weight is orthogonal to $\a$. If $q$ is odd, the
sum of the two symmetric weights is orthogonal to $\a$.
\hfill $\Box$.
\end{proof}

\begin{proposition}
The barycenter of the weights in the $i$th coordinate face $F_i$ is
parallel to the $i$th fundamental weight, and
\[
    \sum_{\mu\in P_i} \mu = \frac{m_i |P_i|}{(\vw_i, \vw_i)} \vw_i.
\]
\end{proposition}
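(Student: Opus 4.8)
The plan is to set $S=\sum_{\mu\in P_i}\mu$ and show first that $S$ is orthogonal to every simple root $\a_j$ with $j\neq i$, which forces $S$ to be parallel to $\vw_i$, and then to pin down the proportionality constant by pairing $S$ against $\vw_i$. Recall that since $(\vw_i,\a_j)=\delta_{ij}$, writing $\mu=\sum_k c_k(\mu)\a_k$ gives $(\mu,\vw_i)=c_i(\mu)$, so that $P_i=\{\mu\in P(\lambda)\mid (\mu,\vw_i)=m_i\}$. Because $\Pi$ is a basis of $E$, the roots $\{\a_j\mid j\neq i\}$ span a hyperplane whose orthogonal complement is the line $\R\vw_i$; hence establishing $(S,\a_j)=0$ for all $j\neq i$ is exactly what is needed to conclude that $S$ is parallel to $\vw_i$, and thus that the barycenter $\frac{1}{|P_i|}S$ is parallel to the $i$th fundamental weight.

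The key observation behind the orthogonality is that, for each fixed $j\neq i$, the set $P_i$ decomposes as a disjoint union of complete $\a_j$-strings. Indeed, if $\mu\in P_i$ then every weight in the $\a_j$-string through $\mu$ differs from $\mu$ by an integer multiple of $\a_j$, and since $j\neq i$ this leaves the $\a_i$-coefficient unchanged; as $P(\lambda)$ is saturated the whole string lies in $P(\lambda)$, and therefore in $P_i$. Thus the $\a_j$-strings meeting $P_i$ partition $P_i$. Summing the inner product against $\a_j$ over each string and invoking Lemma \ref{stringSum}, which gives $\sum_{\g\in \a_j-(\mu)}(\g,\a_j)=0$, yields $(S,\a_j)=\sum_{\text{strings}}0=0$ for every $j\neq i$.

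Once $S=c\,\vw_i$ is known, the scalar follows from a single pairing: on the one hand $(S,\vw_i)=\sum_{\mu\in P_i}(\mu,\vw_i)=\sum_{\mu\in P_i}m_i=m_i|P_i|$, while on the other $(S,\vw_i)=c\,(\vw_i,\vw_i)$, so $c=m_i|P_i|/(\vw_i,\vw_i)$, which is precisely the claimed formula. I expect the only genuinely delicate point to be the string-decomposition step: one must check carefully that $P_i$ is closed under passing to the full $\a_j$-string, which rests on the saturation of $P(\lambda)$ together with the fact that adding a multiple of $\a_j$ with $j\neq i$ leaves the $\a_i$-coordinate fixed. Everything else—the reduction to orthogonality, the use of Lemma \ref{stringSum}, and the final computation of the constant—is routine.
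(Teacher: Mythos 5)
Your proof is correct and follows essentially the same route as the paper: decompose $P_i$ into complete $\a_j$-strings for each $j\neq i$, apply Lemma \ref{stringSum} to get orthogonality to all $\a_j$ with $j\neq i$ (hence parallelism to $\vw_i$), and then pair against $\vw_i$ to determine the constant. The only difference is that you spell out why the full $\a_j$-string through a weight of $P_i$ stays in $P_i$ (saturation plus invariance of the $\a_i$-coordinate), a point the paper leaves implicit.
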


\begin{proof}  For each
$j\in {\bf {n}}\setminus\{i\}$, $P_i$ is a disjoint union of
$\a_j$-strings. It follows from Lemma \ref{stringSum} that
\[
    \sum_{\mu\in P_i}(\mu, \a_j)=0.
\]
Hence, $\sum_{\mu\in P_i}\mu=a\,\vw_i$. In view of $(\mu, \vw_i)=m_i$ for
all $\mu\in P_i$, we have
\[
(a\,\vw_i,\vw_i)=\left(\sum_{\mu\in P_i}\mu \, , ~\vw_i\right)=
| P_i|(\mu,\vw_i)=m_i| P_i|.
\]
The desired result follows.
\hfill $\Box$
\end{proof}

\begin{lemma} Let $I\subseteq {\bf {n}}$.  Then the barycenter of the
standard parabolic face $F_I$ is $\sum_{i\in I} a_i \vw_i$ where
$a_i$ are non-negative rational numbers for all $i\in I$.
\end{lemma}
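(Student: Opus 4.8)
The plan is to write the barycenter of $F_I$ as $b=\frac{1}{|P_I|}\sum_{\mu\in P_I}\mu$, recalling that $P_I$ is exactly the set of weights lying in $F_I$, and to establish two things: first that $b$ lies in the linear span of $\{\vw_i\mid i\in I\}$, and second that its coefficients there are non-negative rational numbers. The first part is a direct generalization of the preceding proposition on the coordinate face $F_i$, while the non-negativity is the genuinely new content.

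For the span statement, I would fix $j\in\overline I$ and observe that $P_I$ is a disjoint union of $\a_j$-strings: if $\mu\in P_I$, the whole $\a_j$-string through $\mu$ lies in $P(\lambda)$ by saturation, and since $j\neq i$ for every $i\in I$, translating along this string does not alter $c_i(\mu)=m_i$, so the entire string remains in $P_I$. Lemma \ref{stringSum} then gives $\sum_{\mu\in P_I}(\mu,\a_j)=0$, whence $(b,\a_j)=0$ for all $j\in\overline I$. Because $\{\vw_i\}$ is the dual basis of $\{\a_j\}$, i.e. $(\vw_i,\a_j)=\delta_{ij}$, expanding $b$ in this basis forces $b=\sum_{i\in I}a_i\vw_i$ with $a_i=(b,\a_i)$ for $i\in I$.

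The hard part is showing $a_i\ge 0$; this cannot simply be read off from the Gram matrix of the $\vw_i$, since the inverse of a positive definite matrix may have negative entries, so a more structural argument is needed. Instead I would use that every $\mu\in P_I$ satisfies $\mu\le\lambda$ with $c_i(\mu)=m_i=c_i(\lambda)$ for $i\in I$, so $\lambda-\mu=\sum_{j\in\overline I}(m_j-c_j(\mu))\a_j$ is a non-negative combination of $\{\a_j\mid j\in\overline I\}$; averaging over $P_I$ yields $b=\lambda-\sum_{j\in\overline I}\bar n_j\a_j$ with every $\bar n_j\ge 0$. Then for $i\in I$ I compute $a_i=(b,\a_i)=(\lambda,\a_i)-\sum_{j\in\overline I}\bar n_j(\a_j,\a_i)$, in which $(\lambda,\a_i)\ge 0$ because $\lambda$ is dominant, and each remaining term $-\bar n_j(\a_j,\a_i)\ge 0$ because $i\neq j$ forces $(\a_i,\a_j)\le 0$ for distinct simple roots. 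Hence $a_i\ge 0$, and rationality of the $a_i$ is automatic since all weights in $P(\lambda)$, and the cardinality $|P_I|$, are rational. This completes the argument.
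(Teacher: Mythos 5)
Your proof is correct, and the first half (showing $b=\frac{1}{|P_I|}\sum_{\mu\in P_I}\mu$ is orthogonal to $\a_j$ for $j\notin I$ via $\a_j$-strings and Lemma \ref{stringSum}, hence lies in the span of $\{\vw_i\mid i\in I\}$ with $a_i=(b,\a_i)$) coincides with the paper's argument. Where you genuinely diverge is the non-negativity step. The paper proves the stronger pointwise statement that $(\mu,\a_i)\ge 0$ for every single $\mu\in P_I$ and $i\in I$, arguing by contradiction through saturation: if $(\mu,\a_i)<0$ then $\mu+\a_i\in P(\lambda)$ by Proposition \ref{string}, but $c_i(\mu+\a_i)=m_i+1$ contradicts $\mu+\a_i\le\lambda$. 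You instead average $\lambda-\mu=\sum_{j\in\overline I}n_j^{(\mu)}\a_j$ over $P_I$ (using $c_i(\mu)=m_i$ to kill the coefficients indexed by $I$) and compute $a_i=(\lambda,\a_i)-\sum_{j\in\overline I}\bar n_j(\a_j,\a_i)\ge 0$ from dominance of $\lambda$ and $(\a_i,\a_j)\le 0$ for distinct simple roots. Your route is more elementary --- it avoids the saturation property of $P(\lambda)$ entirely and uses only the partial order and the sign pattern of the Cartan matrix; in fact, applied to a single $\mu$ rather than the average, it reproves the paper's pointwise claim without Proposition \ref{string}. The paper's route, by contrast, yields the slightly sharper geometric fact that every weight of $P_I$ (not just the barycenter) pairs non-negatively with $\a_i$ for $i\in I$. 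Your handling of rationality is also marginally more direct ($a_i=(b,\a_i)$ with $b$ a rational combination of weights) than the paper's, which solves the Gram system $\sum_{i\in I}a_i(\vw_i,\vw_j)=|P_I|\,m_j$; both are fine.
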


\begin{proof}
As $P_I$ is a disjoint union of $\a_j$-strings for any fixed $j\in
{\bf {n}}\setminus I$, we know that $(\sum_{\mu\in P_I}\mu \, , ~\a_j)=0$ for all $j\notin I$.
Thus, $\sum_{\mu\in P_I}\mu=\sum_{i\in I}a_i\vw_i$. For any $j\in
I$, we have
\[
\sum_{i\in I}a_i(\vw_i,\vw_j)=\left(\sum_{\mu\in
P_I}\mu \, , ~\vw_j\right)=| P_I|(\mu,\vw_j).
\]
For $i,j\in I$ and $\mu\in P_I$, the numbers $(\vw_i,\vw_j)$ and $(\mu,\vw_j)$ are rational numbers, so are all $a_i$'s. We claim that $(\mu, \a_i)\ge 0$ for $\mu\in P_I$. If not, it follows
from $(\mu, \a_i) < 0$ that $\mu+\a_i\in P(\lambda)$, which shows that
$c_i(\mu+\a_i)=m_i+1$. This contradicts that that $\mu \le \lambda$. Thus $a_i\ge 0$, as desired.
  \hfill $\Box$
\end{proof}

By the lemma above, the barycenter of a standard parabolic face is in
the closure of the fundamental Weyl chamber \cite{Hum}. Since the Weyl group acts
on the fundamental chamber transitively, we have
\begin{corollary} Two distinct standard parabolic faces of the weight polytope $\bf P$
cannot be transformed into one another by elements of the Weyl group.
\end{corollary}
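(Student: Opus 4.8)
The plan is to exploit the $W$-equivariance of the barycenter together with the fact, cited as \cite{Hum}, that the closed fundamental Weyl chamber $\overline C$ is a fundamental domain for the action of $W$ on $E$. Suppose toward a contradiction that $F_I$ and $F_J$ are distinct standard parabolic faces with $wF_I=F_J$ for some $w\in W$, and set $b_I=\frac{1}{|P_I|}\sum_{\mu\in P_I}\mu$ and $b_J=\frac{1}{|P_J|}\sum_{\nu\in P_J}\nu$ for their barycenters.

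First I would record that the barycenter transforms equivariantly. Each element of $W$ acts on $E$ as a linear isometry that permutes $P(\lambda)$ and carries faces of $\mathbf P$ to faces, and since the weights lying in a face are exactly the intersection of that face with $P(\lambda)$, the hypothesis $wF_I=F_J$ forces $wP_I=P_J$; in particular $w$ restricts to a bijection $P_I\to P_J$, so $|P_I|=|P_J|$ and $wb_I=b_J$. Next I would invoke the preceding lemma: it gives $\sum_{\mu\in P_I}\mu=\sum_{i\in I}a_i\vw_i$ with all $a_i\ge 0$, so $b_I$ is a non-negative combination of the $\vw_i$ and hence $(b_I,\a_k)\ge 0$ for every $k$; thus $b_I$ is dominant, i.e. lies in $\overline C$, and the same holds for $b_J$. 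Because $\overline C$ meets each $W$-orbit in exactly one point, the relation $wb_I=b_J$ with $b_I,b_J\in\overline C$ yields $b_I=b_J$.

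The final, and I expect most delicate, step is to pass from equal barycenters back to equal faces. Here I would use that $b_I$ lies in the relative interior of $F_I$: it is the convex combination of the weights in $P_I$ with all coefficients $\frac{1}{|P_I|}>0$, and $P_I$ contains every vertex of $F_I$ (the vertices of $F_I$ are vertices of $\mathbf P$, hence weights in $F_I$), so $b_I\in\mathrm{relint}(F_I)$, and likewise $b_J\in\mathrm{relint}(F_J)$. Since the relative interiors of distinct faces of a convex polytope are pairwise disjoint, $b_I=b_J$ forces $F_I=F_J$, contradicting the assumption that the two faces are distinct. The only genuine obstacle is this last polytope-theoretic input — that the centroid sits in the relative interior and that relative interiors of distinct faces are disjoint — while the equivariance of the barycenter and the fundamental-domain property of $\overline C$ are routine.
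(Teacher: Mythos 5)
Your proposal is correct and follows essentially the same route as the paper, which likewise deduces the corollary from the preceding lemma (barycenters of standard parabolic faces lie in the closed fundamental Weyl chamber) together with the fundamental-domain property of that chamber. You merely make explicit two steps the paper leaves implicit --- the $W$-equivariance of the barycenter and the fact that equal barycenters force equal faces because each barycenter lies in the relative interior of its face --- and both of these are filled in correctly.
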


\begin{lemma}\label{twobasic}
  Let $\bold P$ be a polytope (or cone). Let $\nu$ be a fixed
  vertex. Suppose $[\mu, \nu]$ is an edge (or spans an edge in the cone) with
\[
  \mu-\nu = a_1 (x_1-\nu)+\dots+ a_n(x_n-\nu)
\]
where $a_i>0$ and $x_i\in \bold P$. Then $x_i$ are in the same
edge that contains $[\mu, \nu]$.
\end{lemma}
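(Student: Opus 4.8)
The plan is to use the fact that an edge of a polytope is an \emph{exposed} face, so it is cut out by a single linear functional, and then to apply that functional to the given convex relation.

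First I would produce an affine functional $f$ on $E$ together with a constant $c$ such that $f(x)\le c$ for every $x\in\bold P$ and
\[
\{x\in\bold P \mid f(x)=c\}=[\mu,\nu].
\]
Such an $f$ exists because every face of a polytope is exposed: the edge $[\mu,\nu]$, being a $1$-dimensional face, equals the intersection of $\bold P$ with a supporting hyperplane $H=\{f=c\}$, and for a polytope one may choose $H$ so that $H\cap\bold P$ is precisely this edge. In particular $f(\mu)=f(\nu)=c$, while $f(x_i)\le c$ for each $i$ since $x_i\in\bold P$.

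Next I would linearize at $\nu$. Writing $g(y)=f(\nu+y)-f(\nu)$ for the associated linear functional on directions, one has $g(\mu-\nu)=f(\mu)-f(\nu)=0$. Applying $g$ to the hypothesis $\mu-\nu=\sum_i a_i(x_i-\nu)$ gives
\[
0=g(\mu-\nu)=\sum_i a_i\, g(x_i-\nu)=\sum_i a_i\bigl(f(x_i)-c\bigr).
\]
Each summand satisfies $a_i>0$ and $f(x_i)-c\le 0$, so the vanishing of the sum forces $f(x_i)=c$ for every $i$. Hence each $x_i$ lies in $\{x\in\bold P\mid f(x)=c\}=[\mu,\nu]$, which is exactly the assertion.

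For the cone case the argument is identical: here $\nu$ is the apex, the extreme ray through $[\mu,\nu]$ is again an exposed face, and one takes a linear functional $f$ with $f(\nu)=0$ that is nonpositive on the cone and vanishes exactly along that ray. The same sign computation then forces every $x_i$ onto the ray. The one step requiring care---and what I regard as the main obstacle---is the justification that the edge (respectively the extreme ray) is genuinely exposed, i.e.\ that the supporting hyperplane can be taken to meet $\bold P$ in $[\mu,\nu]$ and nothing more. This is the standard fact that faces of polyhedra are exposed faces; granting it, the lemma reduces to the one-line sign computation above.
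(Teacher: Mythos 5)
Your proof is correct and follows essentially the same route as the paper's: apply a supporting affine functional that is constant on the edge to the relation $\mu-\nu=\sum_i a_i(x_i-\nu)$, and observe that a positive combination of terms all of the same sign can vanish only if every term vanishes. The sole difference is packaging --- the paper uses the several facet-defining functionals $L_k$ whose common zero set cuts out the edge, while you use a single exposing functional (invoking the standard fact that faces of polytopes are exposed); the two reductions are equivalent here.
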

\begin{proof} Assume that $[\mu,\nu]$ (or its span for the cone) is an intersection of
facet $F_k ~ (k=1,\dots, r)$.  Assume that the linear equation for
the hyperplane containing $F_k$ is $L_k(x)=0$ for $k\in [r]$. We
want to prove that $L_k(x_i)=L_k(\nu)$ for all $k\in [r]$ and
$i\in {\bf {n}}$. We have
\[
L_k(\mu-\nu) =
a_1(L_k(x_1)-L_k(\nu))+\dots+a_n(L_k(x_n)-L_k(\nu)) =0.\]

Since $x_i\in \bold P$, $L_k(x_i)-L_k(\nu)$ have the same sign
for all $x_i ~(i=1, \dots, n)$. Therefore, $L_k(x_i)=L_k(\nu)$
for all $k\in [r]$.
This means that $x_i$ is in the same edge that contains
$[\mu,\nu]$. \hfill $\Box$

\end{proof}

\begin{lemma}\label{edgeroot}
    Every edge of $\bold P$ is parallel to a root.
\end{lemma}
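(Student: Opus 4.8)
The plan is to reduce an arbitrary edge to a one-dimensional standard parabolic face by conjugating its exposing functional into the dominant chamber, and then to read off the root direction from Theorem \ref{keytheorem}. The key point is that this reduction does not require the (later, and logically dependent) fact that every face is parabolic.

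First I would record that $W$ acts on $\bold P$ by isometries permuting its faces, and in particular carries edges to edges; so it suffices to show that every edge can be carried by some $w\in W$ onto a one-dimensional standard parabolic face. Let $e$ be an edge of $\bold P$. Since $e$ is a face of a polytope it is exposed: there is a nonzero $v\in E$ with $e=\{x\in\bold P\mid (x,v)=\max_{y\in\bold P}(y,v)\}$. Choose $w_0\in W$ with $v_+:=w_0v$ in the closure of the fundamental Weyl chamber. Because $(x,v)=(w_0x,\,w_0v)$ for all $x$ and $w_0\bold P=\bold P$, the set $w_0e$ is exactly the face of $\bold P$ exposed by the dominant functional $v_+$, and $\dim(w_0e)=\dim e=1$.

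Next I would identify the face exposed by a dominant functional as standard parabolic. Writing $v_+=\sum_i b_i\vw_i$ with $b_i=(v_+,\a_i)\ge 0$, and using that on $\bold P$ one has $(x,\vw_i)=c_i(x)\le m_i$ with equality exactly on $F_i$, I obtain $(x,v_+)=\sum_i b_i c_i(x)\le\sum_i b_i m_i=(\lambda,v_+)$, with equality if and only if $c_i(x)=m_i$ for every $i$ in $I:=\{i\mid b_i>0\}$. Hence $w_0e=F_I$, a standard parabolic face, and it is one-dimensional. By Theorem \ref{keytheorem}(3), $|\I0|=\dim E_{F_I}=1$, so $\I0=\{i_0\}$, whence $W_{\overline I^0}=\{1,s_{\a_{i_0}}\}$ and Theorem \ref{keytheorem}(1) gives $F_I=\mathrm{conv}\{\lambda,\,s_{\a_{i_0}}\lambda\}$. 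Thus $w_0e$ is parallel to $\a_{i_0}$, and therefore $e$ is parallel to $w_0^{-1}\a_{i_0}$, a root.

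The step I expect to be the main obstacle is the reduction itself rather than the final computation. A general edge need not be standard parabolic: when $\lambda$ lies on a wall, the edge through the top vertex joining $\lambda$ to $s_\beta\lambda$ for the highest root $\beta$ is parallel to a non-simple root and is not of the form $F_I$, so one cannot simply translate the edge to $\lambda$ and invoke Theorem \ref{keytheorem} directly. The device that keeps the argument self-contained is conjugating the exposing functional into the dominant chamber; the care needed is in verifying that a dominant functional exposes precisely a standard parabolic face, which I would base on the half-space description $c_i(x)\le m_i$ of $\bold P$ already established for the coordinate faces. I note that Lemma \ref{twobasic} provides an alternative endgame: once one writes $\mu-\lambda$ as a positive combination of the vectors $s_\beta\lambda-\lambda$ with $\beta$ a positive root, all these points must lie on $e$, and one-dimensionality forces a single root direction; in that approach the obstacle shifts to proving the cone identity $\mathrm{cone}\{w\lambda-\lambda\mid w\in W\}=\mathrm{cone}\{s_\beta\lambda-\lambda\mid \beta\in\Phi^+\}$, which is why I prefer the functional route above.
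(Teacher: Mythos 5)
Your proof is correct, and it takes a genuinely different route from the paper's. The paper works directly at the vertex $\lambda$: it reduces to an edge $[\lambda,w\lambda]$, shows via Lemma \ref{twobasic} that the two endpoints are separated by exactly one reflection hyperplane $H_\a$ (two separating hyperplanes would force four auxiliary weights onto the edge and make two distinct positive roots parallel), and then invokes simple transitivity of $W$ on chambers to get $w\lambda=s_\a\lambda$. You instead expose the edge by a linear functional, conjugate that functional into the closed dominant chamber, and observe that a dominant functional $v_+=\sum_i b_i\vw_i$ with $b_i\ge 0$ exposes exactly $F_I$ for $I=\{i\mid b_i>0\}$ (this is immediate from $(x,\vw_i)=c_i(x)\le m_i$ on $\bold P$ and the paper's own description $F_I=\{\mu\in\bold P\mid c_i(\mu)=m_i \text{ for } i\in I\}$); Theorem \ref{keytheorem} then identifies a one-dimensional $F_I$ as $\mathrm{conv}\{\lambda,s_{\a_{i_0}}\lambda\}$, parallel to the simple root $\a_{i_0}$. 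There is no circularity, since Theorem \ref{keytheorem} precedes and is independent of Lemma \ref{edgeroot}. What each approach buys: the paper's argument is local and self-contained, and shows in addition that every edge at $\lambda$ has the form $[\lambda,s_\a\lambda]$ for a positive root $\a$; your argument is shorter given Theorem \ref{keytheorem}, but more strikingly it proves much more than the lemma asks --- since every face of a polytope is exposed, your reduction shows outright that every nonempty face of $\bold P$ is $W$-conjugate to a standard parabolic face, i.e., it yields Theorem \ref{polyproperties} directly, bypassing the paper's induction through Theorem \ref{span} and Proposition \ref{bki}. It would be worth saying explicitly that you are using the standard fact that all faces of a polytope are exposed, but that is a presentation point, not a gap.
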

\begin{proof} It suffices to prove this for edges attached to
the highest dominant weight $\lambda$. Assume that $\mu=w\lambda \not=\lambda$ and the segment $[\lambda,
\mu]$ is an edge.  We assert that $\mu$ and $\lambda$ are
separated by one and only one reflection hyperplane $H_{\a}$ where
$\a$ is a positive root and $H_{\a}=\{x\in E \mid (x, \a)=0\}$.

If not, there are two positive distinct roots $\a, \b$  such that
$\mu$ and $\lambda$ are separated by two different reflection
hyperplanes $H_{\a}$ and $H_{\b}$. Then  $(\lambda, \a)>0$, $(\mu,
\a)<0$ and $(\lambda, \b)>0$, $(\mu, \b)<0$.  Therefore,
$\lambda-\a, \lambda-\b, \mu+\a$, and $\mu+\b$ are weights in
$P(\lambda)$. We have

\[
\mu-\lambda = \frac{(\mu+\a -\lambda) + (\mu+\b-\lambda) +
(\lambda-\a-\lambda)+ (\lambda-\b-\lambda)} {2},
\]
which is a contradiction by Lemma \ref{twobasic}. Thus $\mu $ and $\lambda$
are separated by one hyperplane $H_{\a}$. But $\mu $ and $\lambda$
are in the closures of two different Weyl chambers, one of which
is transformed to the other by the reflection $s_\a$. It follows
that $\mu = s_{\a} \lambda$
 from the fact that the Weyl group acts on all the chambers
 simply transitively. Here $\a$ is a positive root
 but not necessary a simple root. \hfill $\Box$
\end{proof}


The following theorem follows directly from the above lemma.

\begin{theorem} \label{span} Given a face $F$ of $\bold P$,
let $E_F = $ {\rm Span}$\{\mu-\nu \mid \mu, \nu\in F\}$.
Then the space $E_F$ is spanned by roots for any face $F$ of $\bold P$.
\end{theorem}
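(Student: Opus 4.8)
The plan is to derive the theorem from Lemma~\ref{edgeroot} together with the elementary polytope-theoretic fact that the direction space of a face is spanned by the directions of its edges. Concretely, I would argue that $E_F$ is spanned by the edge vectors of $F$, that each such edge is an edge of $\bold P$, and that every edge of $\bold P$ is parallel to a root.

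First I would fix a vertex $\nu_0$ of $F$. Since $F$ is a polytope, it is the convex hull of its finitely many vertices, so every element of $F$ is a convex combination of vertices; hence $E_F=\mathrm{Span}\{\mu-\nu\mid \mu,\nu\in F\}$ already equals $\mathrm{Span}\{v-\nu_0\mid v\text{ a vertex of }F\}$. Next I would use the connectivity of the $1$-skeleton of $F$: for each vertex $v$ there is a chain of vertices $\nu_0=u_0,u_1,\dots,u_m=v$ in which every $[u_{t-1},u_t]$ is an edge of $F$. Telescoping, $v-\nu_0=\sum_{t=1}^m(u_t-u_{t-1})$ exhibits each spanning vector of $E_F$ as a sum of edge vectors of $F$, so $E_F$ is spanned by the edge directions of $F$.

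Finally, because $F$ is a face of $\bold P$ and faces of faces are faces, each edge of $F$ is a one-dimensional face of $\bold P$, i.e.\ an edge of $\bold P$; Lemma~\ref{edgeroot} then makes it parallel to a root, and the theorem follows. The only ingredient beyond Lemma~\ref{edgeroot} that requires care is the connectivity of the edge graph of $F$, which guarantees that vertex differences can be routed through edges. This is a standard result in the theory of convex polytopes and uses no special feature of the weight polytope, so I expect the deduction to be short; indeed the substantive geometric content has already been absorbed into Lemma~\ref{edgeroot}.
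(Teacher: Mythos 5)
Your proposal is correct and follows exactly the route the paper intends: the paper states that Theorem~\ref{span} ``follows directly'' from Lemma~\ref{edgeroot}, and your argument (reduce to vertex differences, route them through the connected edge graph of $F$, note that edges of $F$ are edges of $\bold P$, then apply Lemma~\ref{edgeroot}) is precisely the standard deduction the authors leave implicit. No gap; you have simply written out the details the paper omits.
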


Now we are ready to show that all nonempty faces of a weight polytope are
parabolic. The proof of the following theorem is patented after that of
\cite[Theorem 5.6]{CM}. For completeness, we give the proof
here.

\begin{theorem}\label{polyproperties}
Every nonempty face of a weight polytope is parabolic.
\end{theorem}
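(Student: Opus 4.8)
The plan is to realise an arbitrary nonempty face as the set on which a linear functional attains its maximum over ${\bf P}$, and then to move that functional into the closed fundamental Weyl chamber. Concretely, I would write $F=\{x\in{\bf P}\mid (\ell,x)=\max_{{\bf P}}(\ell,\cdot)\}$ for some $\ell\in E$. Choosing $w\in W$ with $w\ell$ dominant and replacing $F$ by $wF$ is legitimate, since \emph{parabolic} is defined up to the $W$-action (Definition \ref{parabolicFace}): it suffices to prove that $wF$ is standard parabolic. So I may assume $\ell$ is dominant. Because the vertices of ${\bf P}$ are exactly the points of $W\lambda$, and $(\ell,\lambda)\ge(\ell,w\lambda)$ for every $w\in W$ when both $\ell$ and $\lambda$ are dominant, the maximum is attained at $\lambda$; hence $\lambda\in F$ and $F$ is the convex hull of those vertices $w\lambda$ with $(\ell,w\lambda)=(\ell,\lambda)$.

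Next I would pin down these vertices. Set $K=\{i\in{\bf n}\mid(\ell,\a_i)=0\}$. If $w\lambda=\lambda-\sum_j a_j\a_j$ with all $a_j>0$ is a vertex of $F$, then $0=(\ell,\lambda)-(\ell,w\lambda)=\sum_j a_j(\ell,\a_j)$, and since $\ell$ is dominant each term $(\ell,\a_j)\ge 0$, forcing $(\ell,\a_j)=0$ for every index occurring; that is, the support of $\lambda-w\lambda$ lies in $K$. Lemma \ref{descent} then provides $u\in W_K$ with $w\lambda=u\lambda$, so the vertex set of $F$ is precisely $W_K\lambda$ and $F=\mathrm{conv}(W_K\lambda)$.

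Finally I would match $F$ with a standard parabolic face using Theorem \ref{keytheorem}(1). Put $I=\overline K$, so that $\Pi_{\overline I}=\Pi_K$ and $\I0$ indexes the component $\Gamma^0(\Pi_K\cup\{-\lambda\})$ of $\{-\lambda\}$. A connected component of $\Pi_K$ that is \emph{not} attached to $\{-\lambda\}$ consists of nodes $\a_i$ with $(\lambda,\a_i)=0$, hence lies in $J$ and fixes $\lambda$; the components that \emph{are} attached, together with $\{-\lambda\}$, form $\Gamma^0(\Pi_K\cup\{-\lambda\})$. Therefore $W_K\lambda=W_{\I0}\lambda$, and Theorem \ref{keytheorem}(1) gives $F_I=\mathrm{conv}(W_{\I0}\lambda)=\mathrm{conv}(W_K\lambda)=F$. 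Thus the translated face is standard parabolic, and the original face is parabolic.

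The step I expect to carry the real weight is the passage from $(\ell,w\lambda)=(\ell,\lambda)$ to $w\lambda\in W_K\lambda$: this is exactly where Lemma \ref{descent} is indispensable, and without it the identification of the vertex set would be the crux. An alternative route, closer in spirit to Lemmas \ref{twobasic}--\ref{edgeroot} and Theorem \ref{span}, is to argue at the vertex $\lambda$ directly: the edges of $F$ at $\lambda$ span $E_F$, and by Lemma \ref{edgeroot} each has the form $[\lambda,s_{\b}\lambda]$ for a positive root $\b$. The obstacle there is to show that these edge roots are simple (equivalently, that the tangent cone of ${\bf P}$ at $\lambda$ is generated by $\{-\a_i\mid (\lambda,\a_i)>0\}$), after which one would set $I=\{i\mid F\subseteq F_i\}$, obtain $E_F=\mathrm{Span}(\cI)=E_{F_I}$, and upgrade the inclusion $F\subseteq F_I$ to equality by comparing dimensions via Theorem \ref{keytheorem}(3).
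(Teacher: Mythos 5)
Your argument is correct, and it takes a genuinely different route from the paper's. The paper proves Theorem \ref{polyproperties} by downward induction on the dimension of the face: for a facet it uses Theorem \ref{span} and Proposition \ref{bki} to see that $\Phi\cap E_F$ is a rank-$(n-1)$ root subsystem, conjugates a root basis of it into $\Pi$ so that $wE_F\perp\vw_i$, and then forces the supporting level to be $m_i$ or $l_i$; the inductive step repeats this argument inside a standard parabolic face $F_I$. You instead work directly with a supporting functional $\ell$ of the face: after moving $\ell$ into the closed dominant chamber you show $\lambda\in F$ and identify the vertex set of $F$ as $W_K\lambda$ with $K=\{i\mid(\ell,\a_i)=0\}$ --- the one step carrying real weight being the passage from $(\ell,w\lambda)=(\ell,\lambda)$ to $w\lambda\in W_K\lambda$, which Lemma \ref{descent} supplies --- and then match $F$ with $F_{\overline K}$ via Theorem \ref{keytheorem}(1), after observing that the components of $\Pi_K$ not attached to $\{-\lambda\}$ consist of simple roots orthogonal to $\lambda$ and so contribute nothing to $W_K\lambda$. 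Your route needs no induction and makes no use of Theorem \ref{span}, Lemmas \ref{twobasic} and \ref{edgeroot}, or Proposition \ref{bki}; it amounts to the statement that the normal fan of $\bold P$ is refined by the Coxeter fan, and it produces an explicit dictionary $\ell\mapsto F_{\overline K}$. What the paper's longer argument buys is the intermediate geometric results themselves (notably that $E_F$ is spanned by roots), which are of independent interest and parallel \cite{CM}. One small bookkeeping point: Lemma \ref{descent} is stated for positive \emph{integer} coefficients, so your appeal to it implicitly relies on the paper's standing normalization that the $m_i$ are integers (hence $\lambda-w\lambda$ lies in the root lattice); this is harmless but worth making explicit.
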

\begin{proof} Let $F$ be a face with $\dim  F=n-p$ where $1\le p\le n$. We use induction on $p$ to prove the desired result. If $p=1$ then $F$ is a facet. From Theorem \ref{span} and Proposition
\ref{bki} it follows that $\Phi\cap E_F$ is a root subsystem of $\Phi$ with rank
$n-1$. Let $\Pi'$ be a root basis of $\Phi\cap
E_F$. Then there exists $w\in W$ such that $w\Pi'\subseteq \Pi$.  Let
$\a_i$ be the only fundamental root in $\Pi$ which does not belong
to $w\Pi'$. So $wE_F$ is perpendicular to $\vw_i$,
denoted by $wE_F\perp \vw_i$. Therefore, there exists a real number
$a$ such that $(x, \vw_i)=a$ for all $x\in wF$. It follows that
$\{x\in E\,\mid\,(x,\vw_i)=a\}$ is a supporting hyperplane of the weight polytope. Let $l_i=(w_0\lambda, \vw_i)$ where $w_0$ is the longest
element in the Weyl group $W$.  Then $l_i\le a\le m_i$. This forces
$a=m_i$ or $a=l_i$. If not, vertices $\lambda$ and $-w_0\lambda$
will be at different sides of the hyperplane $\{x\in
E\,\mid\,(x,\vw_i)=a\}$, a contradiction. If $a=m_i$ then $wF=F_i$. If
$a=l_i$, then $-l_i = (\lambda, \vw_{i'})=m_{i'}$ where
$\vw_{i'}=-w_0(\vw_i)$. Hence $w_0wF=F_{i'}$.

Now we assume that $n\ge p > 1$ and all faces of the weight polytope $\bold P$ of dimension
greater than $n-p$ are parabolic. By the induction hypothesis,
without loss of generality, we can assume that $F$ is a facet of a
standard parabolic face $F_I$ where $\dim  F=n-p$ and $\dim
F_I=n-p+1$. By Theorem \ref{keytheorem}, $F_I$ is the convex hull of
$W_{\I0}\lambda$ and $F_I$ is spanned by $\{\a_i \mid i\in \I0\}$. Note that
there exists $w\in W_{\overline I^0}$
such that $wE_F\perp \vw_j$ where $j\in \I0$. Meanwhile, there
exists a real number $a$ such that $(x, \vw_j)=a$ for all $x\in
wF$. Let $l_j=(\eta_I, \vw_j)$. Recall $\eta_I=u_0\lambda$ is the
least element in $F_I$ where $u_0$ is the longest element in $
W_{\overline I^0}$. Then $a=l_j$ or $a=m_j$.

If $a=m_j$ then $\lambda\in F$. Hence $wF =\{\lambda+\mu\,\mid\,\mu\in
E_{F_I}\cap \vw_j^{\perp}\}\cap F_I =F_I\cap F_j$. We are done.
If $a=l_j$, then $\eta_I\in wF$. Hence $wF
=\{\eta_I+\mu\,\mid\,\mu\in E_{F_I}\cap \vw_j^{\perp}\}\cap F_I$. Let $\pi$ be the orthogonal
projection of $E$ onto $E_{F_I}$. Then $E_{F_I}\cap \vw_k^{\perp}=
E_{F_I}\cap \pi(\vw_k)^{\perp} $ for all $k\in \I0$. Hence
$\{\pi(\vw_k) \mid k\in \I0\}$ is the coweights with respect to $\cI$
in root system $\Phi(\cI)$. Thus, there exists $j'$ such that
$u_0(\pi(\vw_j)) =-\pi(\vw_{j'})$. Since $u_0(F_I)=F_I$, we have
$u_0(E_{F_I}\cap \vw_j^{\perp})=E_{F_I}\cap \pi(\vw_{j'})^{\perp}
=E_{F_I}\cap \vw_{j'}^{\perp}$. Thus, $F$ is parabolic, since $u_0wF=
\{u_0(\eta_I)+\mu\mid\mu\in E_{F_I}\cap \vw_{j'}^{\perp}\} \cap F_I =\{
\lambda+\mu\mid\mu\in E_{F_I}\cap \vw_{j'}^{\perp}\} \cap F_I=F_I\cap
F_{j'}$. \hfill $\Box$
\end{proof}

We conclude this section by giving the
$f$-polynomial of weight polytopes.

\begin{theorem} Let ${\cal I}=\{I\subseteq {\bf n} \,\mid\, \Gamma(\Pi_I \cup \{-\lambda\}) \mbox{ is connected } \}$ and let
$J$ be the type of $\lambda$. For each $I\in{\cal I}$, let $I^*=I\cup \{j\in J \mid (\a_j, \a_i)=0 \mbox{ for all } i\in
I\}$. Then the f-polynomial of $\bold P$ is

\[
    \sum_{I\in {\cal I}} \frac{| W|}{|W_{I^*}|} t^{|I|}.
\]

\end{theorem}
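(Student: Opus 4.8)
The plan is to read the $f$-polynomial off the face structure by counting the faces of each dimension and organising them into $W$-orbits. By Theorem~\ref{polyproperties} every nonempty face is parabolic, hence $W$-equivalent to a standard parabolic face, and by the corollary asserting that two distinct standard parabolic faces cannot be carried to one another by $W$, each $W$-orbit of faces contains \emph{exactly one} standard parabolic face. Thus the nonempty faces split into orbits indexed by the standard parabolic faces, which by Theorem~\ref{bijection} correspond bijectively to $\mathcal I$. For $I\in\mathcal I$ the diagram $\Gamma(\Pi_I\cup\{-\lambda\})$ is connected, so in the notation of Theorem~\ref{keytheorem} the associated face is $F=\mathrm{conv}(W_I\lambda)=F_{\overline I}$, and part~(3) of that theorem gives $\dim F=|I|$. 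By the orbit--stabiliser theorem the orbit of $F$ consists of $|W|/|\mathrm{Stab}_W(F)|$ faces, all of dimension $|I|$, so the $f$-polynomial $\sum_d f_d t^d$ (with $f_d$ the number of $d$-dimensional faces) equals
\[
  \sum_{I\in\mathcal I}\frac{|W|}{|\mathrm{Stab}_W(F)|}\,t^{|I|}.
\]
Everything therefore reduces to proving that $\mathrm{Stab}_W(F)=W_{I^*}$.

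For the stabiliser I would pass to the barycentre. Since $\sum_{\mu\in wP_{\overline I}}\mu=w\sum_{\mu\in P_{\overline I}}\mu$, the (normalised) weight sum $b_F$ of $F$ is $W$-equivariant, i.e.\ $w\,b_F$ is the barycentre of $wF$; as $b_F$ lies in the relative interior of $F$ and a face is determined by any interior point, the set-wise stabiliser of $F$ equals the point-wise stabiliser of $b_F$. The lemma computing the barycentre of a standard parabolic face gives $b_F=\sum_{i\in\overline I}a_i\vw_i$ with all $a_i\ge 0$; hence $(b_F,\a_k)=a_k\ge0$ for $k\in\overline I$ and $(b_F,\a_k)=0$ for $k\in I$, so $b_F$ is dominant. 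By the standard description of stabilisers of dominant points, $\mathrm{Stab}_W(b_F)=W_S$ with $S=\{k\mid(b_F,\a_k)=0\}=I\cup\{j\in\overline I\mid a_j=0\}$. It remains to identify $\{j\in\overline I\mid a_j=0\}$ with $I^*\setminus I=\{j\in J\mid(\a_j,\a_i)=0\text{ for all }i\in I\}$.

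To do this I would write $a_j=\sum_{\mu\in P_{\overline I}}(\mu,\a_j)$ for $j\in\overline I$. Exactly as in the barycentre lemma, $(\mu,\a_j)\ge0$ for every weight $\mu$ of $F$ (else $\mu+\a_j$ would be a weight with $c_j=m_j+1$, contradicting $\mu\le\lambda$ via Proposition~\ref{string}), so $a_j=0$ iff $(w\lambda,\a_j)=(\lambda,w^{-1}\a_j)=0$ for all $w\in W_I$. The routine direction is $(\Leftarrow)$: if $j\in J$ and $\a_j\perp\a_i$ for all $i\in I$ then $W_I$ fixes $\a_j$ and $(\lambda,\a_j)=0$, forcing $a_j=0$. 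The \emph{main obstacle} is the converse, namely showing that $a_j=0$ forces $\a_j$ orthogonal to \emph{every} node of $\Pi_I$, including those lying in $J$: the choice $w=s_{\a_i}$ only yields $(\a_j,\a_i)(\lambda,\a_i)=0$, which settles $i\in I\setminus J$ but not $i\in I\cap J$. For the latter I would use the connectivity of $\Gamma(\Pi_I\cup\{-\lambda\})$: if $\a_j$ were adjacent to some $\a_i$ with $i\in I$, pick a shortest path $\{-\lambda\},\a_{p_1},\dots,\a_{p_t}=\a_i$ in the extended diagram and compute $\beta=s_{\a_{p_1}}\cdots s_{\a_{p_t}}\a_j$. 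Because distinct simple roots have nonpositive inner product while consecutive path nodes are adjacent, each reflection contributes a strictly positive coefficient, giving $\beta=\a_j+\sum_l d_l\a_{p_l}$ with $d_l>0$; since $p_1\notin J$ forces $(\lambda,\a_{p_1})>0$ and $(\lambda,\a_{p_l})\ge0$ otherwise, we obtain $(\lambda,\beta)\ge d_1(\lambda,\a_{p_1})>0$, i.e.\ $(w\lambda,\a_j)>0$ for $w=(s_{\a_{p_1}}\cdots s_{\a_{p_t}})^{-1}\in W_I$, contradicting $a_j=0$. Hence $a_j=0$ forces $\a_j\perp\a_i$ for all $i\in I$ and (taking $w=1$) $j\in J$, so $S=I^*$.

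As a final sanity check I would verify the boundary cases: $I=\emptyset$ gives $I^*=J$ and the vertex term $\tfrac{|W|}{|W_J|}t^0$, the $W$-orbit of $\lambda$; and $I=\mathbf n$ gives $I^*=\mathbf n$, $W_{\mathbf n^*}=W$, and the single top term $t^{n}$ for $\mathbf P$ itself, since there is no $j\in\overline{\mathbf n}$ to adjoin. Combining the orbit count with $\mathrm{Stab}_W(F)=W_{I^*}$ then yields the stated formula.
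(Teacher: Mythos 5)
Your proof is correct and follows the same route as the paper: orbit--stabilizer for the $W$-action on faces, with exactly one standard parabolic face per orbit (via Theorem \ref{polyproperties} and the corollary on distinct standard parabolic faces) and stabilizer $W_{I^*}$. The paper dismisses the stabilizer identification as ``a simple calculation''; your barycenter argument combined with the reflections-along-a-shortest-path computation for the case $i\in I\cap J$ supplies precisely the details it omits, and both steps are sound.
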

\begin{proof} Consider the action of the Weyl group $W$ on the faces of $\bold P$.
A simple calculation yields that $W_{I^*}$ is the stabilizer of the standard
parabolic face $F$, where $F$ is the convex hull of $W_I\lambda$. \hfill $\Box$
\end{proof}

{\bf Acknowledgements}~ The authors thank Dr. Reginald Koo for his careful reading of and comments on the paper.

\newpage
\baselineskip 15pt
\vspace{1cm}
\noindent Zhuo Li \\
Department of Mathematics \\
Xiangtan University\\
Xiangtan, Hunan 411105, P. R. China\\
\noindent Email: zhuoli123@gmail.com \\

\noindent You'an Cao  \\
Department of Mathematics \\
Xiangtan University\\
Xiangtan, Hunan 411105, P. R. China\\
\noindent Email: cya@xtu.edu.cn\\

\noindent Zhenheng Li (Correspondence Author)\\
College of Mathematics and Information Science \\
Hebei University, P. R. China\\
Baoding, Hebei, China 071002\\
\noindent Email: zhli@hbu.edu.cn\\

\end{document}